\providecommand{\U}[1]{\protect\rule{.1in}{.1in}}
\providecommand{\U}[1]{\protect\rule{.1in}{.1in}}
\providecommand{\U}[1]{\protect\rule{.1in}{.1in}}
\providecommand{\U}[1]{\protect\rule{.1in}{.1in}}
\providecommand{\U}[1]{\protect\rule{.1in}{.1in}}
\newtheorem{theorem}{Theorem}
\newtheorem{corollary}[theorem]{Corollary}
\newtheorem{example}[theorem]{Example}
\newtheorem{lemma}[theorem]{Lemma}
\newtheorem{proposition}[theorem]{Proposition}
\newtheorem{remark}[theorem]{Remark}
\newtheorem{thm}{Theorem}[section]
\newtheorem{defn}[thm]{Definition}
\theoremstyle{definition}
\numberwithin{equation}{section}
\newcommand{\C}{\mathbb C}
\newcommand{\Z}{\mathbb Z}
\newcommand{\Q}{\mathbb Q}
\newcommand{\E}{\mathbb E}
\renewcommand{\v}{\varphi}
\def \L{\Lambda}
\def\si{\sigma}
\newcommand{\La}{\Lambda}
\newcommand{\resumename}{R\'esum\'e}
\newcommand{\ssq}{\ensuremath{\subseteq}}
\newcommand*{\vertbar}{\rule[-1ex]{0.5pt}{2.5ex}}
\newcommand{\mc}[1]{\ensuremath{\mathcal{#1}}}
\newcommand{\set}[1]{{\left\{{#1}\right\}}}
\newcommand{\abs}[1]{{\left|{#1}\right|}}
\newcommand{\bra}[1]{{\left({#1}\right)}}
\newcommand{\norm}[1]{{\left\|{#1}\right\|}}
\newcommand{\scal}[1]{{\left\langle{#1}\right\rangle}}
\begin{document}
\title[Full spark frames]{Full Spark Frames in the Orbit of a Representation}
\author{Romanos Diogenes Malikiosis, Vignon Oussa}

\begin{abstract}
We present a new infinite family of full spark equal norm tight frames in finite dimensions arising from a unitary group representation,
where the underlying group is the semi-direct product of a cyclic group by a group of automorphisms. The only previously known
algebraically constructed infinite families were the harmonic, Gabor and Dihedral frames. Our construction hinges on a theorem that requires no group structure. Additionally, we illustrate our results by providing explicit constructions of full spark frames. 

\end{abstract}
\maketitle

\section{Introduction}

The notion of a full spark frame is of fundamental importance in the theory of signal processing and relevant in areas such as compressed sensing and quantum information theory. 
Although the notion of a frame was first defined and used in infinite-dimensional Hilbert spaces, its finite counterpart took
over in significance, especially in applications, and this is precisely the setting of the current article. Most notably,
we mention the solution to the Kadison-Singer problem \cite{KadisonSinger} and the SIC-POVM existence problem \cite{zauner}; the first problem can be formulated in
the finite frame setting and has implications to diverse areas of mathematics, such as operator theory and Banach space
theory \cite{Casazza13}, while the second
problem seems to indicate the existence of a profound connection between quantum information theory and algebraic number
theory, especially Hilbert's 12th problem \cite{SICANT}.
For a comprehensive
 introduction to finite frames, we refer the reader to the books \cite{fframes,waldron2018group}.

We will briefly describe the motivation behind the search for full spark frames, that arises from applications.
In a finite-dimensional complex vector space, say $\C^N$, a frame is just a finite spanning set. We consider the following
setting: suppose that we want to transmit a vector $v\in\C^N$, through a channel with erasures, i.e., some information packets
are lost to the recipient. We try to find an optimal way, to ensure that the recipient will have all the necessary information
to recover the initial vector $v$.

Transmitting the coordinates of $v$ is not a good strategy, because we cannot recover $v$ even if just one coordinate is lost. Similarly, transmitting each coordinate of $v$ a number of times (i.e., we send every coordinate to the recipient 100 times) is also not a good strategy. Recoverability of $v$ significantly increases when we send inner products $\scal{\v_j,v}$, where $\Phi=\set{\v_1,\dotsc,\v_M}\ssq\C^N$ is a spanning set of $\C^N$. We want to choose $\Phi$ in such a way, so that every $N$ vectors of $\Phi$ form a basis of $\C^N$. When this happens, we can recover $v$ from any $N$ inner products. For this reason, a set $\Phi$ with this property is also named \emph{maximally robust
to erasures}.
This means that no matter which $M-N$ inner products are lost, we can still recover $v$.

This is precisely the definition of a \emph{full spark frame}. Picking $M$ vectors at random (say, independently and uniformly from a Gaussian distribution) will yield
a full spark frame with probability 1, as long as $M>N$. However, \emph{verifying} whether a given set of vectors is full spark is an NP-hard problem \cite{ACM12}. Moreover, a random selection of vectors provides absolutely no control over other characteristics of a frame, such as tightness, coherence, condition number, uniformity of norm, etc. For this purpose, it is important
to build such frames algebraically (i.e., in a deterministic way), with prescribed properties.

On the one hand, observe that any frame is canonically associated with a tight frame obtained by applying the inverse of the square-root of the frame operator to it. Additionally, full spark frames exist in abundance \cite{ACM12,MR3633768}, and as such, full spark tight frames can be easily constructed. On the other hand, full spark equal norm tight frames are much harder to come by, and representation theory provides powerful tools that can be exploited to construct such frames in a systematic fashion. From a representation theoretic perspective, the following question arises naturally. Is it possible to characterize finite-dimensional and unitary representations of finite groups for which, there exists a vector whose orbit contains a full spark equal norm and tight frame? To the best of our knowledge, this is a difficult question which has not been systematically studied in the literature. However, several concrete representations have been thoroughly investigated. For instance \cite{MR2190681}, Lawrence, Pfander, and Walnut considered the action of a unitary and irreducible representation of a finite Heisenberg group acting on an $n$-dimensional vector space $n\geq 2$, when the underlying group is cyclic. Any frame contained in the orbit of a fixed vector is called a finite-dimensional Gabor frame. Such a representation is a finite-dimensional analog of the Schr\" odinger representations, which are known to be the group-theoretic foundation of Gabor Theory \cite{MR3495345,MR1843717}. 
Regarding the problem of existence of finite-dimensional Gabor tight frames, which are full spark, Lawrence, Pfander, and Walnut gave some partial solution in \cite{MR2190681}. Precisely, they proved that in the case where $n$ is prime, there exists an orbit containing a full spark frame consisting of $n^2$ vectors. In fact, the set of all vectors generating such a frame is Zariski open and dense in $\mathbb{C}^n.$ This problem was eventually completely settled by the first author of the current paper (see \cite{M15}) who not only established the existence of tight Gabor frames for all natural numbers $n\geq 2$ but also gave a deterministic construction. 
When the underlying group of the Heisenberg group is abelian, non-cyclic, the first author showed that the Gabor frames under question can never be full spark \cite{SparkDef}.

In another direction, the second author of this paper and Sheehan considered a class of quasiregular representations of the Dihedral group and investigated the existence and explicit construction of full spark frames \cite{oussa2015dihedral,oussa2018dihedral} associated with such representations. Surprisingly, their investigation revealed that the existence of full spark frames is a function of the parity of the dimension of the representations. Precisely, it is shown in \cite{oussa2018dihedral} that there exists an orbit, which is an equal norm full spark frame if and only if the dimension of the quasiregular representation is odd.  Additionally, an algorithmic construction of such frames can be found in  \cite{oussa2018dihedral}.

The present paper is organized as follows: in Section \ref{Char}, we define representations of finite groups that are full spark or spark deficient, and give some criteria for spark
deficient groups. Next, in Section \ref{Semi}, we focus on representations of semi-direct products of cyclic groups by a subgroup of automorphisms, 
and provide some new infinite families of
full spark frames, via the existence of full spark groups. The main technique generalizes the method used
in a previous article of the first-named author \cite{M15}. Finally, in Section \ref{nogroup} we extend these results by noting that an underlying group structure is not always
necessary to produce full spark tight frames, and provide some examples.

\section{Notation, terminology, and general results}\label{Notation_Terminology}

Let $\mathcal{H}$ be a finite-dimensional Hilbert space equipped with an inner
product which we shall denote by $\left\langle \cdot,\cdot\right\rangle .$ Recall that a
finite sequence of vectors ${\left(  {v}_{k}\right)  }_{k\in I}$ is called a
frame for $\mathcal{H}$ if there exist positive real numbers $A\leq B$ such
that
\[
A\left\Vert w\right\Vert ^{2}\leq%
{\displaystyle\sum\limits_{k\in I}}
\left\vert \left\langle w,v_{k}\right\rangle \right\vert ^{2}\leq B\left\Vert
w\right\Vert ^{2}%
\]
for all vectors $w\in\mathcal{H}$. The constants $A,B$ are called the lower
frame bound and upper frame bound of ${\left(  {v}_{k}\right)  }_{k\in
I}$ respectively. In cases where the two bounds coincide, we call ${\left(
{v}_{k}\right)  }_{k\in I}$ a \emph{tight frame}. Thanks to Cauchy-Schwarz
inequality, it is clear that any finite collection of vectors has an
upper frame bound. In fact, since we are only considering finite-dimensional
vector spaces, it is not difficult to verify that a collection of vectors
${\left(  {v}_{k}\right)  }_{k\in I}$ has a lower frame bound if and only if it spans $\mathcal{H}.$

Any frame ${\left(  {v}_{k}\right)  }_{k\in I}$ for $\mathcal{H}$ gives rise
to three important operators: the \emph{analysis}, \emph{synthesis} and \emph{frame operators}. The
analysis operator is given by the map
\[
C:w\mapsto\left(  \left\langle w,v_{k}\right\rangle \right)  _{k\in I}\in
l^{2}\left(  I\right), w\in \mathcal{H}.
\]
Its adjoint is the synthesis operator, $C^{\ast}:\left(  c_{k}\right)  _{k\in I}\mapsto%
{\displaystyle\sum\limits_{k\in I}}
c_{k}v_{k}\in\mathcal{H},$ and the operator
\[
S=C^{\ast}C:w\mapsto%
{\displaystyle\sum\limits_{k\in I}}
\left\langle w,v_{k}\right\rangle v_{k}%
\]
obtained by composing the analysis and the synthesis operators defines a self-adjoint map on $\mathcal{H}$ known as the frame operator. It is
a standard fact in frame theory that ${\left(  {v}_{k}\right)  }_{k\in I}$ is
a frame if and only if the frame operator $S$ is invertible \cite{fframes,waldron2018group}. Note also that
$S$ is invertible if and only if the analysis operator $C$ is injective. Moreover, if ${\left(  {v}_{k}\right)  }_{k\in I}$ is a frame for $\mathcal{H},$ then every vector $w\in\mathcal{H}$ admits admits an expansion of the type
\begin{equation}
w=%
{\displaystyle\sum\limits_{k\in I}}
\left\langle w,S^{-1}v_{k}\right\rangle v_{k}=%
{\displaystyle\sum\limits_{k\in I}}
\left\langle w,v_{k}\right\rangle S^{-1}v_{k}.\label{expansion}%
\end{equation}
Generally, since (\ref{expansion}) requires inverting the frame operator, the process of reconstructing a vector $w$ from the coefficients $\langle w, v_k \rangle, k\in I$ can be computationally expensive. However, for the
special case where ${\left(  {v}_{k}\right)  }_{k\in I}$ is a tight frame, the
frame operator is just a multiple of the identity map, and in that case (\ref{expansion}) simply becomes $$
w=%
\frac{1}{A}{\displaystyle\sum\limits_{k\in I}}
\left\langle w,v_{k}\right\rangle v_{k}.$$ For this reason, tight frames are more appealing than non-tight ones. 
\subsection{Unitary representations of finite groups}
Let $G$ be a finite group. A \emph{unitary representation} $\pi$ is a homomorphism
which maps $G$ into the group $\mathcal{U}\left(  \mathcal{H}_{\pi}\right)  $
of unitary operators acting in some Hilbert space $\mathcal{H}_{\pi}$. We say
that $\pi$ is an $n$-dimensional representation of $G$ if $\mathcal{H}_{\pi}$
is an $n$-dimensional vector space over the field of complex numbers. \\

A representation is said to be \emph{faithful} if its kernel is trivial. Moreover, two unitary representations $\pi$ and $\tau$ of $G$ acting on $\mathcal{H}%
_{\pi}$ and $\mathcal{H}_{\tau}$ respectively, are \emph{unitarily equivalent} if there exists a unitary operator $J:\mathcal{H}_{\pi}\rightarrow
\mathcal{H}_{\tau}$ such that  $J\pi\left(  x\right)  J^{-1}=\tau\left(
x\right)  $ for all $x\in G.$ \\

Let $\mathcal{K}$ be a vector subspace of $\mathcal{H}_{\pi}.$ We say that
$\mathcal{K}$ is $\pi$-invariant if for every $x\in G,$ $\pi\left(  x\right)
\mathcal{K}$ is a subset of $\mathcal{K}$. Associated to any $\pi$-invariant
subspace of $\mathcal{H}_{\pi},$ is a unitary representation $\pi
_{\mathcal{K}}$ of $G$ obtained by restricting the action of $\pi$ to
$\mathcal{K}$ as follows: $\pi_{\mathcal{K}}\left(  x\right)  =\pi\left(
x\right)  |\mathcal{K}$ for all $x\in G.$ Such a representation is called a
\emph{subrepresentation} of $\pi.$ Any representation which admits a nontrivial
subrepresentation is called \emph{reducible}. Otherwise, such a
representation is called \emph{irreducible}. Irreducible representations of finite
groups play a central role in their representation theory since they form the basic 
building blocks of all unitary representations. For this reason, an important
aspect of representation theory of finite groups consists of classifying up to
unitary equivalence, all irreducible representations of a given finite group.\\ 

A representation $\pi$ of a finite group $G$ acting in a finite-dimensional Hilbert space $\mathcal{H}_{\pi}$ is said to be \emph{cyclic} if there exists at least one vector $v\in \mathcal{H}_{\pi}$ such that $\pi(G)v$ spans  $\mathcal{H}_{\pi}.$ \\

The left action of any group on itself, induces an important unitary
representation known as the \emph{left regular representation} $L$ of $G$ which acts
on the Hilbert space $l^{2}\left(  G\right)  $ as follows. Given $f\in
l^{2}\left(  G\right)  ,$ and $x,y\in G,$ $L\left(  x\right)  f\left(
y\right)  =f\left(  x^{-1}y\right).$ The regular representation of a finite group is known to be the mother of all representations since any irreducible representation of $G$ is unitarily equivalent to a subrepresentation of the left regular representation (see \cite[Chapter 3]{MR3025357}). 

\subsection{Group full spark frames}

Let $\pi$ be unitary representation of a finite group acting in a finite-dimensional vector space. Then $\pi$ admits a cyclic vector (a vector whose orbit is a spanning set), if and only if $\pi$ is equivalent to a subrepresentation of the regular representation \cite{waldron2018group}. Thus, the study of representations giving rise to frames can be essentially reduced to the study of subrepresentations of the regular representation. 

\begin{defn}
Let $G$ be a finite group and let $\pi:G\rightarrow\mathcal{U}(\mathcal{H})$
be a unitary and cyclic representation acting on some finite-dimensional Hilbert space $\mathcal{H}$. We say that the representation $\pi$ is spark deficient if for every vector $v\in\mathcal{H},$ the sequence ${\left(  {\pi(g)v}\right)
}_{g\in G}$ is spark deficient (it is not a full spark frame.) Moreover, we say that $\pi$ is full
spark if there is a vector $v\in\mathcal{H}$ such that ${\left(  {\pi(g)v}\right)
}_{g\in G}$ is a full spark frame. Such a vector when it exists will be called a $\pi$-full spark vector.
\end{defn}

It is worth noting if a representation is full spark then the set of $\pi$-full spark vectors is a Zariski open and dense subset of the Hilbert space on which the representation is acting. \\


\begin{defn}
If $G$ is a finite group such that every irreducible unitary representation of
$G$ which is not a character is spark deficient, we call $G$ an irreducibly spark deficient
group. On the other hand, we shall say that $G$ is irreducibly full spark if every unitary
irreducible representation of $G$ is full spark.
\end{defn}

\begin{remark} Our investigation seems to reveal that non abelian irreducibly full spark groups are quite rare. Moreover, it is worth highlighting that there do exist groups which are neither irreducibly full spark nor irreducibly spark deficient. For instance, let $D_{2N}$ be the Dihedral group of order $2N, N\geq 3.$ Then it is known that \cite{oussa2015dihedral} such a group is irreducibly full spark if $N$ is prime, irreducibly spark deficient if $N$ is even, and neither if $N$ is odd and composite. Furthermore, since every abelian group is trivially irreducibly full spark, the classification of irreducibly full spark groups is only interesting for nonabelian groups. 
\end{remark}

\noindent Recall that for a finite group $G,$ the convolution $\ast$ product is
defined as follows. Given complex-valued functions $f,g$ defined on $G,$
\[
f\ast g\left(  x\right)  ={\sum\limits_{y\in G}}f\left(  y\right)  g\left(
y^{-1}x\right)  .
\]
Let $\pi$ be a unitary irreducible representation of $G$ acting in some
Hilbert space $\mathcal{H}$. Next, for a fixed nonzero vector $\phi
\in\mathcal{H}$, let $V_{\phi}:\mathcal{H}\rightarrow l^{2}\left(  G\right)  $
be a linear operator defined as follows: $V_{\phi}\psi\left(  x\right)
=\left\langle \psi,\pi\left(  x\right)  \phi\right\rangle $ for $\psi
\in\mathcal{H}.$ Put
\[
\mathcal{C}_{\pi}=\left\{  c:G\rightarrow\mathbb{C}:\abs{\mathrm{supp}\left(  c\right)}  =\dim\left(  \mathcal{H}
\right)
\right\}  .
\]

Our first result in this section, gives a complete characterization of $\pi$-full spark frame vectors in terms of the convolution product on $G.$ Precisely, it states that $\phi$ is a $\pi$-full spark vector if and only  if no element of $\mathcal{C}_{\pi}$ is a left zero divisor of $V_{\phi}\phi$ with respect to the convolution operation on $G.$

\begin{proposition}
\label{fullspark_char} $\phi$ is a $\pi$-full spark vector if and only if for
any $c\in\mathcal{C}_{\pi}$,  $ c\ast V_{\phi}\phi$ is a nonzero sequence. 
\end{proposition}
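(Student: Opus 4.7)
The plan is to translate the convolution condition $c \ast V_\phi \phi \ne 0$ into a purely linear-algebraic statement about the orbit $\{\pi(y)\phi\}$, and then verify the two directions separately. For any $c : G \to \C$, set $u_c := \sum_{y \in G} c(y)\pi(y)\phi \in \H$. Using $\pi(g)^{\ast} = \pi(g^{-1})$, a direct computation yields
\[
(c \ast V_\phi\phi)(x)
= \sum_{y} c(y)\,\langle \phi, \pi(y^{-1}x)\phi\rangle
= \sum_{y} c(y)\,\langle \pi(y)\phi, \pi(x)\phi\rangle
= \langle u_c, \pi(x)\phi\rangle.
\]
Since $\pi$ is irreducible, for any $\phi \ne 0$ the orbit $\{\pi(x)\phi : x \in G\}$ spans $\H$; therefore $c \ast V_\phi\phi$ vanishes identically on $G$ if and only if $u_c = 0$. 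The proposition thus reduces to the claim that $\phi$ is $\pi$-full spark if and only if $u_c \ne 0$ for every $c$ with $|\mathrm{supp}(c)| = n := \dim\H$.

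The forward direction is then immediate: if $\phi$ is full spark, the $n$ vectors $\{\pi(y)\phi : y \in \mathrm{supp}(c)\}$ form a basis of $\H$, so the linear combination $u_c = \sum_{y \in \mathrm{supp}(c)} c(y) \pi(y)\phi$, which has every coefficient nonzero on its support, cannot vanish. For the backward direction, argue by contrapositive: if $\phi$ is not full spark, then there is an $n$-subset $S \subseteq G$ on which the orbit vectors are linearly dependent, and any nontrivial dependence supplies a nonzero $c$ supported in $S$ with $u_c = 0$.

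The crucial last step, which is where the main obstacle lies, is arranging that the support of $c$ is exactly $n$, rather than merely $\leq n$. The natural approach is to begin with a minimally dependent subset $T \subseteq G$, whose canonical dependence automatically has every coefficient nonzero on $T$; when $|T| < n$, one adjoins $n - |T|$ further elements $y_1, \ldots, y_{n-|T|} \in G$ for which $\pi(y_i)\phi$ lies in $\mathrm{span}\{\pi(y)\phi : y \in T\}$. Each such $y_i$ contributes a new dependence supported in $T \cup \{y_i\}$, and a sufficiently generic linear combination of all these dependences yields one whose coefficients are nonzero on the enlarged set $T \cup \{y_1, \ldots, y_{n-|T|}\}$. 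The real difficulty is to guarantee that enough adjoinable elements exist in $G$; this is where the structure of the irreducible representation, together with the fact that $|G|$ is substantially larger than $n$, must be exploited.
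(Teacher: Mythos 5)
Your reformulation via $u_c=\sum_{y\in G}c(y)\pi(y)\phi$ is correct and, in the translation step, cleaner than the paper's. The identity $(c\ast V_\phi\phi)(x)=\langle u_c,\pi(x)\phi\rangle$ together with irreducibility (so that the orbit of $\phi$ spans $\mathcal{H}$) reduces the convolution condition directly to $u_c\neq0$, without the paper's detour through tightness of the orbit (Schur's lemma), the isometry property of $V_\phi$, and the intertwining with the restricted regular representation $\lambda$. Both routes arrive at the same linear-algebraic reformulation: $\phi$ is full spark if and only if $\sum_{y\in\mathrm{supp}(c)}c(y)\pi(y)\phi\neq0$ for every $c$ with $|\mathrm{supp}(c)|=\dim\mathcal{H}$.

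The step you flag as the crux, however, is a genuine gap that your sketch does not close. To produce a $c$ of support \emph{exactly} $n=\dim\mathcal{H}$ from a failure of full spark, you propose to start with a minimal dependent set $T$ and adjoin elements $y$ with $\pi(y)\phi\in\mathrm{span}\{\pi(x)\phi:x\in T\}$; but that span has dimension $|T|-1<n-1$, and nothing guarantees that $n-|T|$ further orbit vectors land in it. Indeed, if $\phi$ happens to be an eigenvector of some $\pi(a)$ with $a\neq e$ of order $2$, the orbit splits into parallel pairs $\{\pi(g)\phi,\pi(ga)\phi\}$, giving $2$-circuits indexed by the cosets of $\langle a\rangle$; unions of such circuits always have even cardinality, so if these were the only circuits of size at most $n$ and $n$ were odd, no $n$-subset would carry a full-support dependence, even though full spark visibly fails. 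Ruling this out would require an additional argument exploiting the representation-theoretic structure, which neither your proposal nor the paper supplies: the paper asserts this implication in a single sentence without addressing the $|\mathrm{supp}(c)|=n$ versus $|\mathrm{supp}(c)|\leq n$ distinction. You have correctly isolated exactly where the difficulty lies, so you are not behind the paper on this point, but the proposal as written is incomplete.
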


\begin{proof}
Suppose that there exists a vector $f$ in the
null-space of $V_{\phi}.$ That is, $\left\langle f,\pi\left(  x\right)
\phi\right\rangle =0\text{ for all }x\in G.$ Since $\pi$ is irreducible, $f$
must be the zero vector. Thus, $V_{\phi}$ is an injective linear operator and
consequently, $\left(  \pi\left(  x\right)  \phi\right)  _{x\in G}$ is a frame
for the Hilbert space $\mathcal{H}$. Moreover, the frame operator $V_{\phi
}^{\ast}V_{\phi}:\mathcal{H\rightarrow H}$ is defined as follows. Given
$\psi\in l^{2}(G),$ we have $V_{\phi}^{\ast}V_{\phi}\psi=\sum_{x\in G}\left\langle \psi,\pi\left(
x\right)  \phi\right\rangle \pi\left(  x\right)  \phi.$ Furthermore, for arbitrary $y\in G,$ it is clear that $V_{\phi}^{\ast}V_{\phi}\pi\left(  y\right)  \psi=\pi\left(  y\right)  V_{\phi}^{\ast}V_{\phi}\psi.$ Thus, the linear operator $V_{\phi}^{\ast}V_{\phi}$ is in the commutant of the
algebra generated by $\pi.$ Since $\pi$ is irreducible, according to Schur's
lemma, the linear operator $V_{\phi}^{\ast}V_{\phi}$ must be a constant
multiple of the identity map and consequently, $\left(  \pi\left(  x\right)
\phi\right)  _{x\in G}$ must be a tight frame for $\mathcal{H}$. This implies
that $V_{\phi}$ defines (up to multiplication by a constant) an isometry
between $\mathcal{H}$ and its image $V_{\phi}\left(  \mathcal{H}\right)  .$
Observing that $V_{\phi}\left(  \mathcal{H}\right)  $ is a left-invariant
vector space and letting $\lambda$ be the restriction of the left regular
representation of $G$ to $V_{\phi}\left(  \mathcal{H}\right)  ,$ it follows
that $V_{\phi}$ intertwines $\pi$ with $\lambda.$ As such, it is clear that
$\left(  \pi\left(  x\right)  \phi\right)  _{x\in G}$ is a full spark tight
frame for $\mathcal{H}$ if and only if $\left(  \lambda\left(  x\right)
V_{\phi}\phi\right)  _{x\in G}$ is a full spark tight frame for the Hilbert
space $V_{\phi}\left(  \mathcal{H}\right)  .$ However, the latter statement
holds if for arbitrary $c\in\mathcal{C}_{\pi},$ $
0\neq\sum_{x\in G}c\left(  x\right)  \lambda\left(  x\right)  V_{\phi}\phi.$ 
The stated result follows then from the observation that for arbitrary $y\in
G,$ $
\sum_{x\in G}c\left(  x\right)  V_{\phi}\phi\left(  x^{-1}y\right)  =c\ast
V_{\phi}\phi\left(  y\right).$

\end{proof}

\begin{proposition}
Let $\pi$ be a finite-dimensional representation of a finite group $G$ acting
in a Hilbert space $\mathcal{H}.$ Then $\pi$ is full spark with a $\pi$-full spark vector $\phi$ if and
only if for any nonzero vector $\psi\in\mathcal{H},$ the sequence $\left(
\left\langle \psi,\pi\left(  x\right)  \phi\right\rangle \right)  _{x\in G}$
has at most $\dim\left(  \mathcal{H}\right)  -1$ components which are equal to zero.
\end{proposition}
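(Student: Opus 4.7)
The plan is a direct duality argument between linear independence of orbit subsets and the existence of nonzero vectors orthogonal to them. Set $n=\dim(\mathcal{H})$ and recall that by definition, $\phi$ is a $\pi$-full spark vector precisely when every $n$-element subfamily of the orbit $(\pi(x)\phi)_{x\in G}$ is linearly independent, hence forms a basis of $\mathcal{H}$.

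For the forward implication, I would assume that $\phi$ is $\pi$-full spark and derive a contradiction from the existence of a nonzero $\psi\in\mathcal{H}$ whose coefficient sequence $(\langle\psi,\pi(x)\phi\rangle)_{x\in G}$ vanishes on at least $n$ indices $x_1,\dots,x_n\in G$. The vanishing equations are equivalent to $\psi$ lying in the orthogonal complement of $\mathrm{span}\{\pi(x_1)\phi,\dots,\pi(x_n)\phi\}$; but by the full spark hypothesis this span equals all of $\mathcal{H}$, forcing $\psi=0$.

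For the converse I would reverse the argument. Given any distinct $x_1,\dots,x_n\in G$, if $\pi(x_1)\phi,\dots,\pi(x_n)\phi$ failed to be linearly independent, their span would be a proper subspace of $\mathcal{H}$ and I could pick a nonzero $\psi$ in its orthogonal complement. Such a $\psi$ would produce at least $n$ zero entries in $(\langle\psi,\pi(x)\phi\rangle)_{x\in G}$, contradicting the hypothesis. Hence every $n$-element subfamily of the orbit is linearly independent and $\phi$ is a $\pi$-full spark vector.

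I do not anticipate a genuine obstacle; the proposition is essentially a clean reformulation of the full spark condition in terms of the analysis operator $V_\phi$, built on the elementary observation that $n$ vectors in an $n$-dimensional Hilbert space form a basis if and only if no nonzero vector is orthogonal to all of them. As a side remark, the hypothesis of the converse automatically implies that the orbit spans $\mathcal{H}$ (any $\psi$ orthogonal to the entire orbit contributes $|G|\ge n$ zero entries), so no separate spanning check is required.
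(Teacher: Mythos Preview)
Your proposal is correct and follows essentially the same approach as the paper: both directions rest on the duality between linear independence of $n$ orbit vectors and the nonexistence of a nonzero vector orthogonal to all of them, phrased via the analysis map. In fact you are slightly more explicit than the paper, which proves only the forward implication and dismisses the converse with ``follows from similar arguments''; your contrapositive for the converse is exactly the intended argument.
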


\begin{proof}
Let us suppose that $\pi$ is full spark with full spark vector $\phi.$ For any
subset $X$ of $G$ of cardinality $\dim\left(  \mathcal{H}\right)  ,$ let
$V_{\phi,X}:\psi\mapsto\left(  \left\langle \psi,\pi\left(  x\right)
\phi\right\rangle \right)  _{x\in X}$ be the analysis operator corresponding
to the frame $\left(  \pi\left(  x\right)  \phi\right)  _{x\in X}.$ Since for
any nonzero vector $\psi\in\mathcal{H}$ the sequence $\left(  \left\langle
\psi,\pi\left(  x\right)  \phi\right\rangle \right)  _{x\in X}$ is nonzero in
$l^{2}\left(  X\right)  ,$ then $\ \left(  \left\langle \psi
,\pi\left(  x\right)  \phi\right\rangle \right)  _{x\in G}$ has at most
$\dim\left(  \mathcal{H}\right)  -1$ components which are equal to zero.
Since the converse of the previous statement follows from similar arguments, we omit its proof.  
\end{proof}

\section{Spark deficient representations}\label{Char}

Throughout this section, unless we state otherwise, all representations under
consideration are assumed not to be unitary characters.\\

The following facts are straightforward yet essential observations. \\

\vskip 0.2cm \noindent (a) Let $\pi$ be a unitary irreducible representation of $G.$ If there
exists a finite subset $X$ of $G$ of cardinality at most $\dim{\mathcal{H}}$
such that the collection of unitary operators $\pi(x),x\in X$ is linearly
dependent in the vector space of linear operators acting on $\mathcal{H}$ then
$\pi$ is spark deficient.

\vskip 0.2cm \noindent (b) Let $\pi$ be a unitary representation of a finite group $G$ acting in a finite-dimensional Hilbert space $\mathcal{H}$ such that $\left\vert
G\right\vert \geq\dim\left(  \mathcal{H}\right) >1.$ If $\pi$ is not faithful then it must be spark deficient. Indeed, if $\pi$ is
assumed not to be faithful, then there exists a non-trivial element $x$ in its
kernel, which maps to the identity operator acting on $\mathcal{H}.$ Letting
$e$ be the identity in $G,$ we obtain that $\pi\left(  e\right)  -\pi\left(
x\right)  $ is the zero operator.

\subsection{The spark deficiency of the direct sum of two equivalent full spark representations}

Since irreducible representations are the building blocks of all representations, the following question is quite natural.  Given a full spark irreducible representation, is the representation obtained as the direct sum of copies of the given representation full spark as well? The result below answers this question negatively under some natural assumptions. 

\begin{proposition}
\label{multiplicity}Let $G$ be a finite group and let $\pi$ be a faithful
unitary representation of $G$ acting in $\mathcal{H}$ for which there exists
an element $x\in G$ such that the order of $\pi\left(  x\right)  $ is at least
twice of the dimension of $\mathcal{H}$. Then $\pi\oplus\pi$ must be spark deficient.
\end{proposition}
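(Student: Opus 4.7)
The plan is to apply an observation-(a)-style argument to $\pi\oplus\pi$ by diagonalizing $\pi(x)$ and showing that, on the cyclic subset $\{e,x,x^2,\ldots,x^{2d-1}\}$ of $G$ (where $d=\dim\mathcal{H}$), the orbit of every vector under $\pi\oplus\pi$ is linearly dependent. Let $n$ denote the order of $\pi(x)$, so $n\ge 2d$ by hypothesis. Faithfulness of $\pi$ guarantees that the $2d$ powers $x^{0},x^{1},\ldots,x^{2d-1}$ are distinct elements of $G$: if $x^{j}=x^{k}$ for $0\le j<k\le 2d-1$ then $\pi(x)^{k-j}=I$ would force $n\mid(k-j)$ with $k-j<2d\le n$, a contradiction.

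Since $\pi(x)$ is unitary, I would fix an orthonormal eigenbasis $e_{1},\ldots,e_{d}$ of $\mathcal{H}$ with eigenvalues $\lambda_{1},\ldots,\lambda_{d}$ (each an $n$-th root of unity), and for an arbitrary $(v_{1},v_{2})\in\mathcal{H}\oplus\mathcal{H}$ expand $v_{1}=\sum_{i}a_{i}e_{i}$ and $v_{2}=\sum_{i}b_{i}e_{i}$. In the ordered basis $\{(e_{1},0),\ldots,(e_{d},0),(0,e_{1}),\ldots,(0,e_{d})\}$ of $\mathcal{H}\oplus\mathcal{H}$, the $2d\times 2d$ matrix $M$ whose $k$-th column is $(\pi\oplus\pi)(x^{k})(v_{1},v_{2})$ for $k=0,1,\ldots,2d-1$ has $i$-th row in the top block equal to $a_{i}(1,\lambda_{i},\lambda_{i}^{2},\ldots,\lambda_{i}^{2d-1})$ and $i$-th row in the bottom block equal to $b_{i}(1,\lambda_{i},\lambda_{i}^{2},\ldots,\lambda_{i}^{2d-1})$. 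Hence rows $i$ and $d+i$ of $M$ span a space of dimension at most one for each $i=1,\ldots,d$, so $\mathrm{rank}(M)\le d<2d$, and the vectors $\{(\pi\oplus\pi)(x^{k})(v_{1},v_{2})\}_{k=0}^{2d-1}$ are linearly dependent.

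Because this conclusion holds for every $(v_{1},v_{2})\in\mathcal{H}\oplus\mathcal{H}$, no vector is a $(\pi\oplus\pi)$-full spark vector, so $\pi\oplus\pi$ is spark deficient. There is no genuine obstacle in the proof; the only point deserving care is the identification of $2d$ distinct elements of $G$, which is exactly where the hypotheses that $\pi$ is faithful and $\mathrm{ord}(\pi(x))\ge 2d$ are used. The underlying conceptual reason the argument works is that in a doubled representation each eigenvalue of $\pi(x)$ contributes a second row that is forced to be colinear with the corresponding Vandermonde row from the first copy, thereby precluding full spark on any cyclic subset of size $2d$.
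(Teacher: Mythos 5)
Your proof is correct, and it takes a more direct route than the paper's. The paper argues by contradiction: assuming $\tau=\pi\oplus\pi$ is full spark with a full spark vector $\phi$, it diagonalizes $\tau(x)$ so that a single doubled eigenvalue $\lambda,\lambda$ is exposed, and then constructs a nonzero $f$ with $\langle f,\tau(x^k)\phi\rangle=0$ for all $k$, contradicting the injectivity of the analysis operator restricted to $\langle x\rangle$ that the full spark hypothesis would entail. Your argument dispenses with the contradiction framework: for every $(v_1,v_2)$ you write down the $2d\times 2d$ orbit matrix over the explicit set $\{e,x,\dotsc,x^{2d-1}\}$ in an eigenbasis of $\pi(x)$ and observe that all $d$ pairs of rows $(i,\,d+i)$ are proportional, so the rank is at most $d<2d$ outright. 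Both proofs rest on the same structural fact — every eigenvalue of $(\pi\oplus\pi)(x)$ has multiplicity at least two — but the paper witnesses the deficiency on the analysis side by producing one kernel vector (using only a single repeated eigenvalue), whereas you witness it on the synthesis side via a uniform rank bound that uses all $d$ repeated pairs; this is slightly more elementary and makes transparent why a $2d$-element subset of $\langle x\rangle$ suffices. One small side remark: faithfulness is not actually needed for your distinctness step, since $x^j=x^k$ already gives $\pi(x)^{k-j}=I$ by homomorphy and the hypothesis on the order of $\pi(x)$ then yields the contradiction; the faithfulness assumption in the proposition's statement is there to make the result nontrivial relative to the paper's earlier observation that non-faithful representations of sufficiently large groups are automatically spark deficient.
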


\begin{proof}
Suppose by contradiction that $\tau=\pi\oplus\pi$ is full spark. Then there
exists $\phi\in\mathcal{K=H}\oplus\mathcal{H}$ such that ${\left(  {\tau
(g)}\phi\right)  }_{g\in G}$ is a full spark frame for $\mathcal{K}$. Next,
let $x\in G$ such that the order of $\pi\left(  x\right)  $ is at least twice
of the dimension of $\mathcal{H}$. Since, every eigenvalue of $\tau\left(
x\right)  $ occurs at least twice, there exists an invertible matrix $Q$ such that $D=Q\tau\left(  x\right)  Q^{-1}$ is a diagonal matrix with diagonal entries listed in order as follows (including multiplicity): $\lambda,\lambda, \lambda_1,\cdots,\lambda_{N-2},$ such that $\left\vert \lambda\right\vert =\left\vert \lambda_{1}\right\vert
=\cdots=\left\vert \lambda_{N-2}\right\vert =1.$ Let $e_1,\cdots,e_N$ be the canonical orthonormal basis for $\mathcal{K}=\mathbb{C}^{N}.$ By assumption, the restriction of $\tau$ to the subgroup generated by $x$ is a cyclic representation  of the subgroup and consequently, it is clear that the analysis operator $f\mapsto\left(  \left\langle f,\tau\left(  y\right)  \phi\right\rangle
\right)  _{y\in\left\langle x\right\rangle }$ defines an injection between $\mathcal{K}$ and $l^{2}\left(  \left\langle
x\right\rangle \right).$ Next, for any integer $k,$ straightforward calculations
give
\begin{align*}
\left\langle f,\tau\left(  x^{k}\right)  \phi\right\rangle =\left\langle
Q^{\ast}\left(  Q^{\ast}\right)  ^{-1}f,\tau\left(  x^{k}\right)
\phi\right\rangle =\left\langle \left(  Q^{\ast}\right)  ^{-1}f,D^{k}Q\phi\right\rangle .
\end{align*}
Moving forward, we shall assume that $\phi$ satisfies the following additional conditions: $Q\phi=\alpha_{1}e_{1}+\alpha_{2}e_{2}+w$ and $w\in\mathbb{C}$-span$\left\{  e_{3},\cdots,e_{N}\right\}  .$ Thus,
\begin{align*}
\left\langle f,\tau\left(  x^{k}\right)  \phi\right\rangle  =\lambda^{-k}\cdot\left\langle \left(  Q^{\ast}\right)  ^{-1}f,\alpha
_{1}e_{1}+\alpha_{2}e_{2}\right\rangle +\left\langle \left(  Q^{\ast}\right)
^{-1}f,D^{k}w\right\rangle .
\end{align*}
Assuming additionally that $f$ is a nonzero vector in $\mathcal{K}$ such that $\left(  Q^{\ast
}\right)  ^{-1}f\in\mathbb{C}e_{1}\oplus\mathbb{C}e_{2}$ and $\left(  Q^{\ast
}\right)  ^{-1}f$ is orthogonal to $\alpha_1 e_1+\alpha_2 e_2,$ it follows that
\[
\left\langle f,\pi\left(  x^{k}\right)  \phi\right\rangle =\overline
{\lambda^{k}}\cdot\left\langle \left(  Q^{\ast}\right)  ^{-1}f,\alpha_{1}%
e_{1}+\alpha_{2}e_{2}\right\rangle =0
\]
for all integers $k.$ Thus, $\left(  \left\langle
f,\tau\left(  y\right)  \phi\right\rangle \right)  _{y\in\left\langle
x\right\rangle }$ is the zero sequence and this clearly contradicts the fact
that the linear map $f\mapsto\left(  \left\langle f,\pi\left(  h\right)  \phi\right\rangle
\right)  _{h\in\left\langle x\right\rangle }$ is assumed to be injective. 
\end{proof}

It is perhaps worth noting that the arguments in the proof of Proposition \ref{multiplicity} also establish the following. 

\begin{proposition}
Let $G$ be a finite group and let $\pi$ be a faithful unitary representation
of $G$ acting in $\mathcal{H}$ for which there exists an element $x\in G$ such
that the order of $\pi\left(  x\right)  $ is at least equal to the dimension
of $\mathcal{H}$ and $\pi\left(  x\right)  $ has an eigenvalue occurring with
multiplicity strictly greater than one. Then $\pi$ must be spark deficient. 
\end{proposition}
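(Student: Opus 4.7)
The plan is to adapt the proof of Proposition \ref{multiplicity} almost verbatim: the doubled eigenvalue that there came from passing to $\pi\oplus\pi$ is, in the present setting, already available inside $\pi(x)$ itself thanks to the multiplicity hypothesis. No direct sum is needed, and the argument should go through with only cosmetic changes, which is presumably the remark that motivates the statement in the first place.

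Concretely, I would argue by contradiction, assuming some $\phi \in \mathcal{H}$ is a $\pi$-full spark vector. I would then conjugate $\pi(x)$ into diagonal form $D = Q\pi(x)Q^{-1}$, arranged so that the first two diagonal entries equal the eigenvalue $\lambda$ of multiplicity at least two, and decompose $Q\phi = \alpha_1 e_1 + \alpha_2 e_2 + w$ with $w \in \mathrm{span}\{e_3,\ldots,e_N\}$. Next I would select a nonzero $f \in \mathcal{H}$ such that $(Q^*)^{-1}f$ lies in $\mathrm{span}\{e_1,e_2\}$ and is orthogonal to $\alpha_1 e_1 + \alpha_2 e_2$; such $f$ exists because the orthogonal complement of a single vector inside the two-dimensional space $\mathrm{span}\{e_1,e_2\}$ is always nontrivial, and $(Q^*)^{-1}$ is invertible. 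The same computation as in Proposition \ref{multiplicity} then shows that
\[
\langle f, \pi(x^k)\phi\rangle = \overline{\lambda^k}\,\langle (Q^*)^{-1}f,\alpha_1 e_1 + \alpha_2 e_2\rangle + \langle (Q^*)^{-1}f, D^k w\rangle = 0
\]
for every integer $k$: the first term vanishes by the choice of $f$, and the second vanishes because $D^k w \in \mathrm{span}\{e_3,\ldots,e_N\}$ while $(Q^*)^{-1}f \in \mathrm{span}\{e_1,e_2\}$.

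To finish, I would invoke faithfulness of $\pi$ together with the order hypothesis on $\pi(x)$: the powers $x^0, x^1, \ldots, x^{N-1}$ are then pairwise distinct in $G$, so the nonzero vector $f$ is orthogonal to $N = \dim\mathcal{H}$ distinct vectors of the orbit $(\pi(g)\phi)_{g\in G}$, contradicting the assumption that any $N$ of them form a basis.

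I do not foresee any genuine obstacle; the heavy lifting was already done in Proposition \ref{multiplicity}. The only small points worth verifying are that the choice of $f$ is available for every $\phi$ without degenerate exclusions (if $\alpha_1 = \alpha_2 = 0$ the orthogonality condition becomes vacuous and any nonzero $f$ with $(Q^*)^{-1}f \in \mathrm{span}\{e_1,e_2\}$ will do), and that the excluded edge case $\dim\mathcal{H}=1$ is exactly the character case ruled out by the standing convention of the section.
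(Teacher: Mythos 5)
Your proof is correct and is exactly the route the paper intends: the paper explicitly remarks that the arguments in the proof of Proposition \ref{multiplicity} already establish this statement, and your adaptation---locating the repeated eigenvalue directly inside $\pi(x)$, splitting $Q\phi$ across $\operatorname{span}\{e_1,e_2\}$ and its complement, choosing $f$ so that $(Q^*)^{-1}f$ is orthogonal to the projection of $Q\phi$, and finally using faithfulness plus the order hypothesis to extract $N$ distinct orbit elements all orthogonal to $f$---is precisely that adaptation. Your handling of the degenerate case $\alpha_1=\alpha_2=0$ and the observation that $\dim\mathcal{H}=1$ is vacuous under the multiplicity hypothesis are also correct.
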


\subsection{Groups with non-trivial centers are irreducibly spark deficient}

Recall that $G$ is a $p$-group if each element in $G$ has a power of $p$ as
its order.

\begin{proposition}
\label{center} Let $G$ be a non-abelian finite group. If $G$ has a non-trivial
center then $G$ is irreducibly spark deficient. Moreover, the following groups are all irreducibly spark deficient: $p$-groups and nilpotent groups.
\end{proposition}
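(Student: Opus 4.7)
The plan is to combine Schur's lemma with observation~(a) from the start of Section~\ref{Char}. Concretely, for any non-character irreducible representation $\pi$ of $G$ we will produce two distinct group elements whose images under $\pi$ are linearly dependent, and then invoke (a) with $|X|=2\le \dim\mathcal{H}$.

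First I would let $\pi:G\to\mathcal{U}(\mathcal{H})$ be an irreducible unitary representation with $\dim\mathcal{H}\geq 2$ (so that $\pi$ is not a character) and pick any non-trivial $z\in Z(G)$, which exists by hypothesis. Since $\pi(z)$ commutes with $\pi(g)$ for every $g\in G$, Schur's lemma forces $\pi(z)=\lambda I$ for some scalar $\lambda$ (necessarily a root of unity because $\pi$ is unitary and $z$ has finite order). Taking $X=\{e,z\}\subset G$, the operators $\pi(e)=I$ and $\pi(z)=\lambda I$ satisfy the non-trivial linear relation $\lambda\pi(e)-\pi(z)=0$, and $|X|=2\leq\dim\mathcal{H}$. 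By observation~(a), $\pi$ is spark deficient. Since $\pi$ was an arbitrary non-character irreducible representation of $G$, this shows $G$ is irreducibly spark deficient.

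The extension to $p$-groups and nilpotent groups is then immediate from classical structure theory. If $G$ is an abelian group, every irreducible representation is a character, so the condition of being irreducibly spark deficient is vacuous. If $G$ is a non-abelian $p$-group, the class equation yields $|Z(G)|>1$, and the first part applies. More generally, any finite nilpotent group has non-trivial center (for instance, because it is the direct product of its Sylow subgroups, each of which is a $p$-group with non-trivial center), so again the hypothesis of the first statement holds.

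The argument is short and contains no real obstacle; the only point that requires a little care is verifying that the two operators $\pi(e)$ and $\pi(z)$ are really distinct elements arising from \emph{distinct} group elements (which is why we need $z\neq e$), and that the linear dependence is non-trivial, which is transparent because both coefficients $\lambda$ and $-1$ are nonzero. The role of the assumption that $\pi$ is not a character is solely to guarantee $|X|=2\le\dim\mathcal{H}$, which is the only size constraint needed to apply~(a).
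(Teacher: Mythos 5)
Your argument is correct and uses essentially the same approach as the paper: both apply Schur's lemma to a non-trivial central element $z$ to get $\pi(z)=\lambda I$, and then read off the linear dependence (the paper phrases it as $\pi(Z(G))f$ spanning a line for every $f$, while you phrase it as the operator relation $\lambda\pi(e)-\pi(z)=0$ and invoke observation~(a)). The second part likewise matches the paper's appeal to the non-trivial center of $p$-groups and nilpotent groups, with your extra note on the vacuous abelian case being a harmless clarification.
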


\begin{proof}
Let $\pi$ be a unitary irreducible representation of $G$ acting in some
Hilbert space $\mathcal{H}$ such that $\dim\mathcal{H}>1.$ Let $Z\left(
G\right)  $ be the center of $G.$ By assumption, $Z\left(  G\right)  $ is a
non-trivial subgroup of $G.$ Next, appealing to Schur's lemma, for arbitrary
$z\in$ $Z\left(  G\right)  $, $\pi\left(  z\right)  $ is a scalar multiple of
the identity operator acting on $\mathcal{H}$. Moreover, given any nonzero
vector $f\in\mathcal{H}$, the span of $\pi\left(  Z\left(  G\right)  \right)  f$ is a
one-dimensional vector subspace of $\mathcal{H}$ and consequently, $\pi$ is
spark deficient. The second claim of the result follows from the well-known fact that  $p$-groups and (more generally) nilpotent groups have non-trivial centers.  
\end{proof}

\begin{remark}
For a unitary representation $\pi$  of a finite group, the following statements are not mutually exclusive. (a) $\pi$ is spark deficient and (b) $\pi$ has an orbit which properly contains a full spark frame. For instance, it is known that the (non unitary characters) irreducible representations of the finite Heisenberg group (which is a nilpotent group) enjoy both of these properties \cite{M15}. In fact, in the setting of groups with non-trivial centers, the more general concept of projective representation is more appropriate. 
\end{remark}

\begin{proposition}
\label{semidirect_general} Let $G=K\rtimes H$ (where $H$ is a non-trivial
subgroup of the automorphism group of $K$) be a finite semi-direct product
group equipped with the operation
\[
\left(  n,h\right)  \left(  m,s\right)  =\left(  nh\left(  m\right)
,hs\right)  \text{ for }\left(  n,h\right)  ,\left(  m,s\right)  \in K\times
H.
\]
If the center of $K$ has a non-trivial element which is fixed by the action of
$H$ then $G$ is irreducibly spark deficient.
\end{proposition}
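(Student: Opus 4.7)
The plan is to reduce this proposition to the previously established Proposition \ref{center}, which says that any non-abelian finite group with a non-trivial center is irreducibly spark deficient. So the entire task boils down to exhibiting a non-trivial central element of $G = K \rtimes H$ under the given hypothesis.

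Let $z$ be the given non-trivial element of $Z(K)$ fixed by every automorphism in $H$. I would claim that the element $(z, e_H) \in G$ lies in $Z(G)$. To verify this, I compute both products with an arbitrary element $(n, h) \in G$ using the semi-direct product law stated in the proposition:
\[
(n, h)(z, e_H) = (n \cdot h(z),\, h) = (nz,\, h),
\]
where the second equality uses $h(z) = z$, and
\[
(z, e_H)(n, h) = (z \cdot e_H(n),\, h) = (zn,\, h) = (nz,\, h),
\]
where the last equality uses $z \in Z(K)$. Hence $(z, e_H)$ commutes with every element of $G$, and it is non-trivial because $z \neq e_K$.

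Having produced a non-trivial central element, the center $Z(G)$ is non-trivial. Provided that $G$ itself is non-abelian (which is automatic in any interesting setting here, since $H$ acts non-trivially on $K$ whenever the proposition has content; and in the abelian case the statement is either vacuous or immediate because characters are excluded from consideration by the convention of Section \ref{Char}), the conclusion follows directly from Proposition \ref{center}.

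I do not expect any serious obstacle here: the only subtlety is ensuring the formula $h(z) = z$ is correctly applied in the product computation, and this is exactly what the hypothesis ``$z$ is fixed by the action of $H$'' was designed to provide. The proof is thus essentially a one-line reduction, and the real content of the proposition is the recognition that the fixed-point condition on $z$ is precisely what promotes a central element of $K$ to a central element of the semi-direct product $K \rtimes H$.
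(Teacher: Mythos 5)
Your proposal is correct and takes essentially the same approach as the paper: both exhibit $(z,e_H)$ as a non-trivial central element of $G$ (the paper by a conjugation computation, you by a direct two-sided product check) and then appeal to Proposition \ref{center}. Your parenthetical verification that $G$ is non-abelian (needed to invoke Proposition \ref{center}) is a small bonus the paper leaves implicit; it holds because $H$ is a non-trivial subgroup of $\mathrm{Aut}(K)$ and hence acts faithfully and non-trivially on $K$.
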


\begin{proof}
Let $e$ be the identity element in $H$ and let $n$ be a non-trivial element of the center of $N$ which is fixed by the
action of the automorphism group $H.$ For arbitrary $\left(  m,h\right)  \in
G,$
\[
\left(  n,e\right)  \left(  m,h\right)  \left(  n^{-1},e\right)  =\left(
nm,h\right)  \left(  n^{-1},e\right)  =\left(  nmh\left(  n^{-1}\right)
,h\right)  .
\]
Since $h\left(  n^{-1}\right)  =h^{-1}\left(  n\right)  =n,$ we obtain $\left(  n,e\right)  \left(  m,h\right)  \left(  n^{-1},e\right)  =\left(
nmn^{-1},h\right).$ Now, since $n$ commutes with $m,$ we have $\left(  n,e\right)  \left(  m,h\right)  \left(  n^{-1},e\right)  =\left(
mnn^{-1},h\right)  =\left(  m,h\right).$ Consequently, $\left(  n,e\right)  $ is a non-trivial element of the center of
$G.$
\end{proof}

A straightforward application of Proposition \ref{semidirect_general} gives the following. 


\begin{corollary}
Let $G=\mathbb{Z}_{N}\rtimes H$ where $H$ is a cyclic group generated by
$h\in\mathbb{Z}_{N}$ such that $h,N$ are relatively prime and $h>1.$ Then $G$ has a
non-trivial center if and only if there exists a nonzero element $x$ in
$\mathbb{Z}_{N}$ which is a zero divisor satisfying the equation $(h-1)x=0.$ 
In other words, if $h$ does not act freely on all of the nonzero elements of
$\mathbb{Z}_{N}$, then $G$ is necessarily irreducibly spark deficient. 
\end{corollary}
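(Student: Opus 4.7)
The plan is to compute $Z(G)$ directly from the semidirect product law and translate its non-triviality into the claimed zero-divisor condition. A pair $(n,\tau) \in G = \mathbb{Z}_N \rtimes H$ lies in $Z(G)$ if and only if $(n+\tau(m), \tau\sigma) = (m+\sigma(n), \sigma\tau)$ for every $(m,\sigma) \in G$; since $H$ is cyclic and hence abelian, the second-coordinate equality is automatic, so the whole problem reduces to the first-coordinate equality.

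I would analyze that equality by specializing the test element. Setting $m = 0$ forces $n = \sigma(n)$ for every $\sigma \in H$, so $n$ must be $H$-fixed. Once this is enforced, the condition collapses to $\tau(m) = m$ for all $m \in \mathbb{Z}_N$; as $H$ embeds faithfully into $\mathrm{Aut}(\mathbb{Z}_N)$, this pins $\tau$ to the identity element of $H$. Writing $H = \langle h\rangle$ with $h$ acting as multiplication on $\mathbb{Z}_N$, the resulting description is
$$Z(G) = \{(x, e_H) : x \in \mathbb{Z}_N, \; (h-1)x \equiv 0 \pmod{N}\},$$
which is non-trivial if and only if $h-1$ is a zero divisor in $\mathbb{Z}_N$. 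This proves the stated equivalence.

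For the ``in other words'' assertion, note that $h$ fails to act freely on $\mathbb{Z}_N\setminus\{0\}$ precisely when some nonzero $x$ satisfies $(h-1)x \equiv 0 \pmod{N}$. By the iff just proved, this forces $Z(G) \neq \{e\}$; since the hypotheses $h > 1$ and $\gcd(h,N) = 1$ make the action of $H$ on $\mathbb{Z}_N$ non-trivial (so $G$ is non-abelian), Proposition \ref{center} applies and delivers the irreducible spark deficiency of $G$.

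The only potential obstacle is notational---keeping straight that $h$ simultaneously denotes an element of $H$, an integer coprime to $N$, and the automorphism of $\mathbb{Z}_N$ given by multiplication by that integer---since the center computation itself is entirely routine.
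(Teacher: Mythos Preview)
Your proof is correct and follows essentially the same route as the paper, which simply records the corollary as ``a straightforward application of Proposition~\ref{semidirect_general}'' without further detail. Your explicit computation of $Z(G)$ in fact does a bit more: Proposition~\ref{semidirect_general} (whose proof shows that an $H$-fixed nonzero element of the abelian normal factor yields a nontrivial central element of $G$) only supplies one direction of the biconditional, whereas your argument also establishes the converse by pinning down $Z(G)$ exactly and invoking Proposition~\ref{center}. The two approaches are the same in spirit; yours is just more complete.
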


Note that if $G=\mathbb{Z}_{N}\rtimes H,$ $H$ is non-trivial and $N$ is prime,
then $G$ has a trivial center. In fact, we will prove in the subsequent
section that when $N$ is prime then $G=\mathbb{Z}_{N}\rtimes H$ is irreducibly full spark
(see Theorem \ref{ZnH}.)

\section{\texorpdfstring{Full-spark frames arising from irreducible representations of 	$\mathbb{Z}_{N}\rtimes H$}{}}\label{Semi}

\subsection{A summary of our main results}

While the preceding section focuses on conditions under which a group fails to be irreducibly full spark, we shall provide here, some positive results for a class of semi-direct product groups. Our main aim in this section is to establish the following.

\begin{theorem}
	\label{ZnH} For every prime natural number $N$ and every subgroup $H<\Z_N^{\times}$, the semi-direct product group $\mathbb{Z}_N\rtimes H$ is irreducibly full spark. 
\end{theorem}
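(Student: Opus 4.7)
Since $N$ is prime, $\Z_N^\times$ is cyclic and so is any subgroup $H<\Z_N^\times$; choose a generator $h$ of $H$ and set $n=|H|$. By Mackey's construction, the non-character irreducible representations of $G=\Z_N\rtimes H$ all have dimension $n$ and arise as $\pi=\mathrm{Ind}_{\Z_N}^G\chi_k$ for a nontrivial additive character $\chi_k$ of $\Z_N$ (the $H$-stabilizer of $\chi_k$ is trivial because $N$ is prime). Using the coset representatives $\{1,h,h^2,\ldots,h^{n-1}\}$, such a $\pi$ realizes on $\C^n=\mathrm{span}\{e_0,\ldots,e_{n-1}\}$ by
\[
\pi(a,1)=D_a:=\mathrm{diag}\bigl(\omega^{\lambda_0 a},\ldots,\omega^{\lambda_{n-1}a}\bigr),\qquad \pi(0,h)=P,
\]
where $\omega=e^{2\pi i/N}$, $\lambda_l:=kh^{-l}\bmod N$, and $P$ is the cyclic shift $Pe_l=e_{l+1\bmod n}$. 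The orbit of $\phi\in\C^n$ then consists of the $Nn$ vectors $\pi(a,h^s)\phi=D_aP^s\phi$, whose $l$-th coordinate equals $\omega^{\lambda_l a}\phi_{l-s}$ (indices on $\phi$ taken mod $n$). The plan is to reduce the theorem to a polynomial non-vanishing statement: for every $X\subset G$ with $|X|=n$, the determinant $\det M_X(\phi)$ of the $n\times n$ matrix whose columns are $\pi(g)\phi$, $g\in X$, is a polynomial in $\C[\phi_0,\ldots,\phi_{n-1}]$, and it suffices to show this polynomial is not identically zero. The finite intersection over $X$ of the corresponding Zariski-open sets is then a nonempty (in fact, dense) set of $\pi$-full spark vectors.

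To analyze $\det M_X(\phi)$ I would group the $n$ columns by the $H$-component: write $X=\bigsqcup_{s=0}^{n-1}(\{s\}\times S_s)$ with $S_s\subset\Z_N$, $n_s:=|S_s|$, and $\sum_s n_s=n$. A block Laplace expansion along this grouping (and factoring the $\phi_{l-s}$ out of each row of the $s$-block) yields
\[
\det M_X(\phi)=\sum_{\mathcal{A}}\varepsilon(\mathcal{A})\prod_{s=0}^{n-1}\Bigl[\det\bigl(\omega^{\lambda_l a}\bigr)_{l\in A_s,\,a\in S_s}\cdot\prod_{l\in A_s}\phi_{l-s}\Bigr],
\]
where $\mathcal{A}=(A_0,\ldots,A_{n-1})$ ranges over ordered partitions of $\{0,\ldots,n-1\}$ with $|A_s|=n_s$ and $\varepsilon(\mathcal{A})=\pm 1$ records the sign of the induced row permutation. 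Each matrix $(\omega^{\lambda_l a})_{l\in A_s,\,a\in S_s}$ is a square submatrix of the $N\times N$ DFT matrix $[\omega^{ij}]$; since $N$ is prime, Chebotarev's theorem on minors of the prime-order DFT guarantees that every such minor is nonzero. It therefore suffices to produce a single ordered partition $\mathcal{A}^*$ whose associated monomial
\[
m^*:=\prod_{s=0}^{n-1}\prod_{l\in A_s^*}\phi_{l-s\bmod n}
\]
is attained by no other valid partition: the coefficient of $m^*$ in $\det M_X(\phi)$ then collapses to $\varepsilon(\mathcal{A}^*)\prod_s\det(\omega^{\lambda_l a})_{l\in A_s^*,\,a\in S_s}\neq 0$, forcing $\det M_X\not\equiv 0$.

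The main obstacle is precisely this combinatorial uniqueness step. Reparametrizing partitions via the function $f:\{0,\ldots,n-1\}\to\{0,\ldots,n-1\}$ with $f(l)=s\Leftrightarrow l\in A_s$ (subject to $|f^{-1}(s)|=n_s$), the associated monomial depends only on the multiset $\{l-f(l)\bmod n\}_l$, so the question becomes: can one always choose $f$ such that its difference-multiset is realized by no other admissible $f$? I would construct $\mathcal{A}^*$ greedily by enumerating the active shifts $s_1<s_2<\ldots<s_r$ (those with $n_s>0$) and assigning $A_{s_j}^*$ to be a consecutive cyclic arc in $\Z_n$ starting at $s_j$, drawn from rows not yet used by earlier blocks, arranged so that the resulting multiset of exponents displays a distinguishing feature—e.g., lexicographically smallest support, or the maximal possible multiplicity of the index $0$ (achieved by placing $l=s$ whenever the counts $n_s$ permit)—which a case analysis on the profile $(n_s)$ in $\Z_n$ shows cannot be replicated by any rival partition. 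Small-case verification (all profiles with $n\le 4$, including delicate ones such as $n_s=1$ for every $s$, where the unique sources turn out to be the pure powers $\phi_j^n$) supports the claim, but the general proof requires genuine $\Z_n$-combinatorial work; this is by far the subtle part. All other ingredients (the explicit Mackey model of $\pi$, Chebotarev's theorem, Zariski density, and the intersection over $X$ argument) are routine, so once the uniqueness step is established the theorem follows.
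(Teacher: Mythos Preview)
Your overall architecture matches the paper's exactly: Mackey's model of $\pi$, the block Laplace expansion of $\det M_X(\phi)$ into products of square DFT submatrices, the appeal to Chebotarev/Evans--Isaacs for nonvanishing of those minors, and the reduction to finding a partition $\mathcal{A}^*$ whose associated monomial is attained by no other partition. So the diagnosis is right, and you have correctly located the only non-routine step.

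The gap is that you do not actually prove the combinatorial uniqueness step; you only sketch a greedy construction and two candidate invariants (lexicographic support, or maximal multiplicity of the index $0$) and defer to ``a case analysis on the profile $(n_s)$.'' Neither of those invariants works in general, and the special case $n_s=1$ for all $s$ (where every admissible $f$ is a permutation of $\Z_n$ and the monomials are indexed by multisets $\{l-f(l)\bmod n\}$) already shows that the choice of $\mathcal{A}^*$ and of the distinguishing statistic have to be made together and with some care.

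The paper's resolution is this. First normalize by a cyclic shift of the $s$-index so that all partial sums satisfy $n_0+\cdots+n_{s-1}\geq s$; this is exactly the Cycle Lemma of Dvoretzky--Motzkin applied to the $\pm1$ sequence encoding the $n_s$. With this normalization, take $A_s^*$ to be the \emph{consecutive} block $P_s=\{K_s,K_s+1,\ldots,K_{s+1}-1\}$ where $K_s=n_0+\cdots+n_{s-1}$; the normalization guarantees every index $l-s$ is already in $\{0,\ldots,n-1\}$ with no wraparound. The distinguishing statistic is the second moment: associate to each admissible partition $\sigma$ the random variable $X_\sigma$ on $\{0,\ldots,n-1\}$ whose distribution is the exponent vector of the corresponding monomial, and show that $\E[X_\sigma^2]$ is \emph{uniquely} minimized at the consecutive partition. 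The proof of unique minimization is two applications of the rearrangement inequality, after handling the wraparound by replacing $\sigma(l)$ with $\sigma(l)+n$ whenever $\sigma(l)-g(l)<0$ (here $g$ is the step function with level sets $P_s$). Once you have this, the coefficient of the CI monomial in $\det M_X(\phi)$ is a single nonzero product of Chebotarev minors, and you are done.
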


The following shows that Theorem \ref{ZnH} generally fails when $N$ is assumed to be composite.   

\begin{theorem}
	\label{EvenN}Let $N$ be a composite number and let $H$ be a subgroup of the
	automorphism group of $\mathbb{Z}_{N}$ of order at least $p$, where $p$ is
	the smallest prime divisor of $N$. Then, $\mathbb{Z}_N\rtimes H$ is irreducibly
	spark deficient.
	
	
	If on the other hand, $N=p^n$ where $p$ a prime, $\abs{H}<p$ and no two elements of $H$ are congruent $\bmod p$, then
	there is an irreducible unitary representation that 
	is full spark. 
\end{theorem}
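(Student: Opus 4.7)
Both halves will rely on the Mackey little-group description: since $\Z_N$ is abelian and normal, every non-character irreducible unitary representation $\pi$ of $G = \Z_N \rtimes H$ arises from a non-trivial $H$-orbit $O = H \cdot k_0 \subseteq \widehat{\Z_N}\setminus\{0\}$ together with a character of the stabilizer $H_{k_0}$, and has dimension $|O|$. Write $p$ for the smallest prime dividing $N$ and $M = \langle N/p \rangle \cong \Z_p$ for the unique order-$p$ subgroup of $\Z_N$.

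For the first assertion, I would assume $|H| \geq p$, take a non-character irreducible $\pi$ corresponding to an orbit $O = H\cdot k_0$, and split into two cases. If $\gcd(k_0, N) > 1$, every character $\chi_k$ with $k \in O$ shares the same non-trivial kernel $\langle N/\gcd(k_0,N)\rangle$ in $\Z_N$, so $\pi$ fails to be faithful and observation (b) of Section \ref{Char} yields spark deficiency. Otherwise $\gcd(k_0, N) = 1$, in which case $H_{k_0}$ is trivial, $\dim \pi = |H| \geq p$, and the characters $\chi_k|_M$ for $k \in O$ depend only on $k \bmod p$. Since $k_0$ is a unit modulo $p$ and the image of $H$ in $(\Z_p)^\times$ has some order $s \leq p - 1$, only $s$ distinct characters of $M$ appear in $\pi|_M$. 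Consequently, for every $\phi$ the subspace $\mathrm{span}\{\pi(n)\phi : n \in M\}$ has dimension at most $s < p$, so the $p$ vectors $\{\pi(n)\phi : n \in M\}$ from $\pi(G)\phi$ are linearly dependent; since $p \leq |H| = \dim \pi$, this exhibits spark deficiency.

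For the second assertion, with $N = p^n$ and $H \leq (\Z_{p^n})^\times$ satisfying $|H| < p$ and having pairwise distinct residues modulo $p$, I would fix $k_0 = 1$ so that the stabilizer is trivial and $\pi = \mathrm{Ind}_{\Z_{p^n}}^{G}\chi_1$ is irreducible of dimension $|H|$. In the basis $\{e_h : h \in H\}$ one has $\pi(n)e_h = \omega^{h^{-1} n} e_h$ and $\pi(h')e_h = e_{h' h}$ with $\omega = e^{2\pi i/p^n}$, so the $h$-coordinate of $\pi(n_i, h_i')\phi$ equals $\omega^{h^{-1} n_i}\phi_{(h_i')^{-1} h}$. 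To produce a $\pi$-full spark vector it suffices to show that for every $|H|$-tuple of distinct elements $(n_i, h_i')_{i=1}^{|H|}$ of $G$, the polynomial $D(\phi) = \det[\omega^{h^{-1} n_i}\phi_{(h_i')^{-1} h}]_{i,h}$ in the coordinates of $\phi$ is not identically zero; the complement of the resulting finite union of vanishing loci will then be a non-empty Zariski open set of $\pi$-full spark vectors in $\C^{|H|}$.

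To verify non-vanishing, I would examine the coefficient of the monomial $\prod_{h \in H}\phi_h$ in the Leibniz expansion of $D(\phi)$; it reduces to an $|H|\times|H|$ partial Fourier determinant $\det[\omega^{h^{-1} n_i'}]_{h,i}$ over $\Z_{p^n}$, whose rows are indexed by $H^{-1}$ (still pairwise distinct modulo $p$) and whose $n_i'$ absorb the $h_i'$-shifts. Writing $\omega = 1 + \zeta$ with $\zeta$ a uniformizer of the prime ideal $\mathfrak{p}$ above $p$ in $\Z[\omega]$, and using the Stirling expansion of $\binom{h^{-1} n_i'}{l}$ to decompose the matrix as $\sum_{l\geq 0} g_l(\zeta)\,(h^{-l})\otimes ((n_i')^l)$ with $g_l(\zeta) = \zeta^l/l! + O(\zeta^{l+1})$, Cauchy--Binet produces a leading $\zeta$-term of order $\binom{|H|}{2}$ proportional to the product of two Vandermonde determinants divided by $\prod_{l<|H|} l!$. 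The factorial denominator is a unit modulo $p$ since $|H| < p$, and the $H^{-1}$-Vandermonde is a unit modulo $p$ by the distinctness hypothesis; hence the leading coefficient will be non-zero modulo $\mathfrak{p}$ whenever the $n_i'$-Vandermonde is. The remaining degenerate configurations, in which the $n_i'$ collide modulo $p$, should be handled by extending the $\zeta$-expansion to higher orders in the same cyclotomic spirit underlying the proof of Theorem \ref{ZnH} and the technique of \cite{M15}. The careful $\mathfrak{p}$-adic bookkeeping required to treat those degenerate cases will be the main obstacle.
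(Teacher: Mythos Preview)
Your argument for the first half is correct and actually cleaner than the paper's. The paper invokes the Alexeev--Cahill--Mixon criterion (Theorem~\ref{ACM}) to conclude that the $|H|\times N$ Fourier submatrix $\mathcal{M}$ with rows indexed by $m\cdot H$ is spark deficient, because $m\cdot H$ misses the residue class $0\bmod p$ and hence is not uniformly distributed over the divisors of $N$. You reach the same conclusion directly by restricting $\pi$ to the order-$p$ subgroup $M\leq\Z_N$ and observing that $\pi|_M$ involves at most $p-1$ distinct characters; this is the same underlying linear-algebra fact, but it bypasses the external reference.

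The second half, however, contains a real gap. Your claim that the coefficient of $\prod_{h\in H}\phi_h$ in $D(\phi)$ equals a single Fourier minor $\det[\omega^{h^{-1}n_i'}]$ with $n_i'=(h_i')^{-1}n_i$ is false whenever the $h_i'$ are not all equal: the substitution $h\mapsto h_i' h$ needed to ``absorb the shift'' is a \emph{different} column permutation in each row and therefore does not preserve the determinant. Concretely, for $H=\{1,h\}$ with $h^2=1$ and group elements $(n_1,1)$, $(n_2,h)$ one computes
\[
D(\phi)=\det\begin{pmatrix}\omega^{n_1}\phi_1 & \omega^{n_2}\phi_h\\ \omega^{h^{-1}n_1}\phi_h & \omega^{h^{-1}n_2}\phi_1\end{pmatrix}=\omega^{\,n_1+h^{-1}n_2}\phi_1^{\,2}-\omega^{\,n_2+h^{-1}n_1}\phi_h^{\,2},
\]
so the coefficient of $\phi_1\phi_h$ vanishes identically, not merely for degenerate $n_i'$. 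Your $\mathfrak p$-adic expansion is being applied to the wrong quantity.

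The paper handles this interaction of shifts and diagonals via Theorem~\ref{maintool}. That theorem does not look at the square-free monomial but at the \emph{consecutive index monomial}: one groups the selected operators by their $H$-component, assigns to each group a block of consecutive row indices, and shows by a rearrangement-inequality argument that the resulting monomial appears uniquely in the Leibniz expansion. Its coefficient is then a product of minors of $\mathcal{M}$ taken from consecutive rows, and the hypothesis that the elements of $H$ are pairwise distinct $\bmod p$, together with the prime-power direction of Theorem~\ref{ACM}, guarantees that every such minor is nonzero. Your $\mathfrak p$-adic/Vandermonde idea is essentially a proof of that last step, but it needs to be fed into Theorem~\ref{maintool} rather than into the square-free coefficient.
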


As we shall see in the proof, there are irreducible unitary representations of $\Z_N\rtimes H$ of maximal dimension $\abs{H}$
that are full spark, when $N=p^n$, $\abs{H}<p$, and no two elements of $H$ are congruent $\bmod p$. However, the group
$\Z_N\rtimes H$ is neither irreducibly full spark, nor spark deficient in this case, as there are other irreducible
unitary representations that are spark deficient.

\subsection{A central sufficient condition for the full spark property}

Let $T$ denote the cyclic shift operator in ${\mathbb{C}}^{N}$, i.e.
\[
T(x_{0},\dotsc,x_{N-1})=(x_{N-1},x_{0},\dotsc,x_{N-2}),
\]
for every vector $(x_{0},\dotsc,x_{N-1})\in{\mathbb{C}}^{N}$; the powers of $T$ will also be called
\emph{cyclic permutations}. A map of the
form
\[
(x_{0},x_{1},\dotsc,x_{N-1})\longmapsto(a_{0}x_{0},\dotsc,a_{N-1}x_{N-1})
\]
is a diagonal operator, and is denoted by $\operatorname{diag} (a_{0}%
,\dotsc,a_{N-1}) $; its matrix representation is
\[%
\begin{pmatrix}
a_{0} & 0 & \ldots & 0\\
0 & a_{1} & \ldots & 0\\
\vdots & \vdots & \ddots & \vdots\\
0 & 0 & \ldots & a_{N-1}%
\end{pmatrix}
.
\]
We will usually index the rows of an $N\times M$ matrix with elements from the
cyclic group $\mathbb{Z} _{N}$. In this setting, we will say that a submatrix
consists of \emph{consecutive} rows, if the corresponding elements of
$\mathbb{Z} _{N}$ are consecutive (more precisely, they form an arithmetic
progression with difference $1\bmod N$). For example, the submatrix consisting
of the top and bottom row of an $N\times M$ matrix consists of consecutive
rows, according to this definition.

Let $\Phi=\set{\v_1,\dotsc,\v_M}\ssq\C^N$ be a finite frame with frame bounds $B\geq A>0.$ 
	We will usually identify $\Phi$ with the $N\times M$ matrix whose columns are the frame elements, i.e.
	\[\Phi=\left(
	\begin{array}{cccc}
	\vertbar & \vertbar &        & \vertbar \\
	\v_1    & \v_2    & \ldots & \v_M    \\
	\vertbar & \vertbar &        & \vertbar 
	\end{array}
	\right).\]
The transpose of such a matrix is the matrix representation of the analysis operator. 

The following result is central to the proofs of our main results. 

\begin{theorem}
\label{maintool} \label{first} Suppose that the $N\times M$ matrix
\[
\mathcal{M}=%
\begin{pmatrix}
m_{00} & m_{01} & \ldots & m_{0,M-1}\\
m_{10} & m_{11} & \ldots & m_{1,M-1}\\
\vdots & \vdots & \ddots & \vdots\\
m_{N-1,0} & m_{N-1,1} & \ldots & m_{N-1,M-1}%
\end{pmatrix}
\]
has the following property: every minor consisting of consecutive rows from
$\mathcal{M}$ is nonzero. Then, there is an open dense subset $\mathcal{S}%
\subseteq\mathbb{C}^{N}$ such that for every $f\in\mathcal{S},$ the set
\[
\mathcal{M}_{\ell}T^{k}f,0\leq\ell\leq M-1,0\leq k\leq N-1,
\]
has the full spark property, where $\mathcal{M}_{\ell}=\operatorname{diag}%
(m_{0\ell},m_{1\ell},\dotsc,m_{N-1,\ell})$.
\end{theorem}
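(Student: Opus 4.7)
My plan is to reduce Theorem~\ref{maintool} to the following claim: for every choice $S = \{(\ell_j, k_j)\}_{j=1}^{N}$ of $N$ distinct pairs in $\{0,\ldots,M-1\} \times \{0,\ldots,N-1\}$, the polynomial
\[ P_S(f) \;:=\; \det\bigl[\,\mathcal{M}_{\ell_1} T^{k_1} f \;\big|\; \cdots \;\big|\; \mathcal{M}_{\ell_N} T^{k_N} f\,\bigr] \]
is not identically zero in the coordinates $f = (f_0, \ldots, f_{N-1})$. Granting this, the set $\mathcal{S} := \mathbb{C}^N \setminus \bigcup_S \{P_S = 0\}$ is the complement of a finite union of proper Zariski-closed subsets and is therefore Zariski open and dense, hence open and dense in the Euclidean topology as well.

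Fix such a subset $S$ and group its pairs by shift index. Let $K = \{k_1 < k_2 < \cdots < k_m\}$ be the set of $k$-values appearing in $S$, with $L_k = \{\ell_j : k_j = k\}$ and $n_k = |L_k|$, so that $\sum_{k \in K} n_k = N$. Because the $N \times n_k$ block of columns with fixed $k_j = k$ factors as $\operatorname{diag}(f_{(i-k) \bmod N}) \cdot \mathcal{M}[\,\cdot\,, L_k]$, the generalized Laplace expansion of $P_S(f)$ along these column blocks reads
\[ P_S(f) \;=\; \sum_{\{R_k\}} \epsilon(\{R_k\}) \prod_{k \in K} \det \mathcal{M}[R_k, L_k] \cdot \prod_{k \in K} \prod_{r \in R_k} f_{(r-k) \bmod N}, \]
where the sum runs over all ordered partitions $\{R_k\}_{k \in K}$ of $\mathbb{Z}_N$ with $|R_k| = n_k$, and $\epsilon \in \{\pm 1\}$ is the combinatorial sign attached to the partition.

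The crucial step is to produce a monomial in this expansion whose coefficient is nonzero. My candidate comes from the \emph{canonical arc partition}: set $b_0 := k_1$, $b_i := b_{i-1} + n_{k_i}$, and define $R^*_{k_i} := \{b_{i-1}, b_{i-1}+1, \ldots, b_i - 1\} \bmod N$. Each $R^*_{k_i}$ is an arc of $n_{k_i}$ consecutive elements of $\mathbb{Z}_N$, so by the hypothesis of the theorem every minor $\det \mathcal{M}[R^*_{k_i}, L_{k_i}]$ is nonzero. If one knows that the canonical arc partition is the \emph{only} partition producing its monomial $f^{\alpha^*} := \prod_i \prod_{r \in R^*_{k_i}} f_{(r - k_i) \bmod N}$, then the coefficient of $f^{\alpha^*}$ in $P_S$ reduces to the single summand $\pm \prod_i \det \mathcal{M}[R^*_{k_i}, L_{k_i}] \neq 0$, whence $P_S \not\equiv 0$.

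The main obstacle is this uniqueness statement: any partition $\{R_k\}_{k \in K}$ with $|R_k| = n_k$ whose multiset of shifts $\bigsqcup_k (R_k - k)$ agrees with that of $\{R^*_k\}$ must in fact coincide with $\{R^*_k\}$. I would attack it by an induction that processes the rows of $\mathbb{Z}_N$ in reverse cyclic order, starting from $k_1 - 1 \bmod N$, and shows at each stage that the residual shift multiset together with the remaining capacities $n_k$ leaves only one admissible assignment for the current row, namely its canonical one. The delicate point is that the shift ranges $[b_{i-1} - k_i,\, b_i - 1 - k_i]$ contributed by distinct blocks can overlap inside $\mathbb{Z}_N$, so local greedy moves are a priori ambiguous; the argument must rule out every spurious alternative by showing that such a deviation propagates to a violation of the disjointness condition $R_k \cap R_{k'} = \emptyset$ further along the cyclic sweep.
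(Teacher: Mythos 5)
Your setup matches the paper's almost exactly: reduce to showing that each determinant $P_S(f)$ is not the zero polynomial, expand by a generalized Laplace expansion along column blocks grouped by the shift index $k$, and isolate the monomial coming from a "canonical" partition of $\Z_N$ into consecutive arcs, whose coefficient from that single partition is a product of consecutive-row minors of $\mc{M}$ and hence nonzero. The paper calls this the consecutive index (CI) monomial and follows the same path.

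The genuine gap is exactly the one you flag at the end: you have not proved that the canonical arc partition is the \emph{only} partition producing its monomial $f^{\alpha^*}$, and the "reverse cyclic order induction" is only a plan, not an argument. This uniqueness claim is the entire content of the theorem once the Laplace expansion is written down, and it is not obvious: distinct arc partitions can produce monomials whose index multisets $\bigsqcup_k(R_k-k)\bmod N$ overlap in complicated ways, and a local greedy sweep (as you observe) does not obviously resolve the ambiguity. There is also a subtler issue with your choice of base point $b_0=k_1$: the paper does not fix the arcs to start at the smallest occurring shift, but instead invokes the Dvoretzky--Motzkin Cycle Lemma applied to the $\pm1$ sequence built from the block sizes $L_j$ to show that \emph{some} cyclic rotation of the arc partition satisfies $L'_0>0$ and the partial-sum condition $L'_0+\dotsb+L'_{j-1}\geq j$ for all $j$; this normalization is exactly what makes $P'_j-j$ consist of honest non-negative integers (not just residues mod $N$), which is what the subsequent argument needs. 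With that normalization in hand, the paper proves uniqueness by attaching to each competing partition $\sigma$ a random variable $X_\sigma$ on $\{0,\dotsc,N-1\}$ whose law is given by the exponents of the associated monomial, and showing via the rearrangement inequality that $\E[X_\sigma^2]$ is strictly minimized at the identity unless $\sigma$ stabilizes every arc. Since $\E[X_\sigma^2]$ is a function of the monomial alone, equality of monomials forces $\sigma$ trivial, giving uniqueness. You would need to supply an argument of comparable strength (the Cycle Lemma normalization plus the second-moment/rearrangement trick, or a correct replacement) to close your proof; without it, the proposal stops short of the theorem.
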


\subsubsection{Proof of Theorem \ref{first}}

Let $f=(z_{0},z_{1},\dotsc,z_{N-1})$. The determinant of the $N\times N$
formed by $N$ column vectors of the form (in a fixed order) $\mathcal{M}%
_{\ell}T^{k}f$ is a homogeneous polynomial in the coordinates of $f$ of degree
$N$. We will show that every such selection of $N$ vectors yields a nonzero
polynomial; the conclusion then follows easily, as we can take $\mathcal{S}$
to be the subset of $\mathbb{C}^{N}$  
where none of these (finitely many) 
polynomials vanish. The complement of $\mathcal{S}$ is a finite union of
varieties (closed subsets), having Lebesgue measure zero.

For this purpose, let 
\[\set{(k_{0},\ell_{0}),\dotsc,(k_{N-1},\ell_{N-1})}\ssq\mathbb{Z}_{N}\times{\left\{
{0,1,\dotsc,M-1}\right\}  }\] 
and define
\[
\Lambda=\set{\mc{M}_{\ell_j}T^{k_j}:0\leq j\leq N-1}
\]
and
\[
D_{\Lambda}(f)=%
\begin{pmatrix}
\mathcal{M}_{\ell_{0}}T^{k_{0}}f & \mathcal{M}_{\ell_{1}}T^{k_{1}}f & \ldots &
\mathcal{M}_{\ell_{N-1}}T^{k_{N-1}}f
\end{pmatrix}
,
\]
where all the $\mathcal{M}_{\ell_{j}}T^{k_{j}}f$ are column vectors. We
rewrite this matrix as
\[
D_{\Lambda}(f)=(D_{\La_0}(f)|D_{\La_1}(f)|\dotsb|D_{\La_{N-1}}(f)),
\]
where $D_{\La_j}(f)$ is the subset of columns of $D_{\Lambda}(f)$ for which $k_{i}=j$
holds, possibly by rearranging the columns of $D_{\Lambda}(f)$ (an operation that
only changes the sign of $\det(D_{\Lambda}(f))$). The determinant of $D_{\Lambda
}(f)$ is up to a sign equal to a sum of products of minors of the $D_{\La_j}(f)$
matrices. Suppose that $D_{\La_j}(f)$ is a $N\times L_{j}$ matrix (i.e. it consists
of $L_{j}$ columns). We also label the rows of $D_{\Lambda}(f)$ by ${\left\{
{0,1,\dotsc,N-1}\right\}  }$, with $0$ being the top row and $N-1$ being the
bottom one. Next, we consider a partition $(A_{0},A_{1},\dotsc,A_{N-1})$ of
${\left\{  {0,1,\dotsc,N-1}\right\}}$ such that $\abs{A_j}=L_{j}$, and
denote by $D_{\La_j}(A_{j})(f)$ the $L_{j}\times L_{j}$ minor of $D_{\La_j}(f)$ consisting
of the rows labeled by $A_{j}$ (in increasing order). As noted above,
$\det(D_{\Lambda}(f))$ is then a sum of products of the form
\begin{equation}
\pm\prod_{j=0}^{N-1}\det(D_{\La_j}(A_{j})(f)).\label{proddet}%
\end{equation}

When we expand the determinants in \eqref{proddet}, we obtain a monomial of
degree $N$ in $z_{0},\dotsc,z_{N-1}$. The coefficient of this monomial in
\eqref{proddet} is
\begin{equation}
\pm\prod_{j=0}^{N-1}\det(\mathcal{M}(A_{j},B_{j})),\label{Msub}%
\end{equation}
where $\mathcal{M}(A_{j},B_{j})$ is the submatrix of $\mc{M}$ with rows from
$A_{j}$ and columns from $B_{j}\subseteq{\left\{  {0,1,\dotsc,M-1}\right\}  }%
$. An element $\ell$ belongs to $B_{j}$ if and only if $\mathcal{M}_{\ell
}T^{j}f$ is a column of $D_{\Lambda}(f)$, which is equivalent to $\mc{M}_{\ell}T^j\in\Lambda$.

It is evident that if we use consecutive rows from the matrix $\mathcal{M}$,
the expression in \eqref{Msub} will be nonzero, by hypothesis. In particular,
we will use
\begin{align*}
P_{0} &  ={\left\{  {0,1,\dotsc,L_{0}-1}\right\}  }\\
P_{1} &  ={\left\{  {L_{0},L_{0}+1,\dotsc,L_{0}+L_{1}-1}\right\}  }\\
&  \vdots\\
P_{N-1} &  ={\left\{  {L_{0}+\dotsb+L_{N-2},\dotsc,N-1}\right\}  }.
\end{align*}
Thus, the monomial that corresponds to \eqref{proddet} has a nonzero
contribution to its coefficient in $\det(D_{\Lambda}(f))$ for this particular
partition of ${\left\{  {0,1,\dotsc,N-1}\right\}  }$; this monomial will
henceforth be called the \emph{consecutive index monomial}, or CI for short.
However, $\det(D_{\Lambda}(f))$ is the sum of terms \eqref{proddet} where the
$A_{j}$ run through the partitions of ${\left\{  {0,1,\dotsc,N-1}\right\}  }$
with $\abs{A_j}=L_{j}$; in order to find the coefficient of the CI monomial in
$\det(D_{\Lambda}(f))$, we have to identify all such partitions which yield this
monomial in \eqref{proddet}, and then sum all such contributions of the form \eqref{Msub}.

As we shall see, the CI monomial appears \emph{uniquely}, i.e. there is a
unique partition $A_{0},\dotsc,A_{N-1}$ that gives the CI monomial; by
hypothesis, the total contribution, that is, the coefficient in
$\det(D_{\Lambda}(f))$ must be nonzero, thus $\det(D_{\Lambda}(f))$ is eventually a
nonzero polynomial, as desired.


Permuting the coordinates of $f$ does not change the fact that
$\det(D_{\Lambda}(f))$ is a nonzero polynomial in the coordinates of $f$, namely $z_{0},\dotsc,z_{N-1}$. In
other words, if
\begin{equation}
\Lambda^{\prime}={\left\{  {\mathcal{M}_{\ell}T^{k-s}:(k,\ell)\in\Lambda
}\right\}  },\label{shift}%
\end{equation}
and $D_{\L^{\prime}}(f)$ is defined similarly, that is,
\[
D_{\Lambda^{\prime}}(f)=%
\begin{pmatrix}
\mathcal{M}_{\ell_{0}}T^{k_{0}-s}f & \mathcal{M}_{\ell_{1}}T^{k_{1}-s}f & \ldots &
\mathcal{M}_{\ell_{N-1}}T^{k_{N-1}-s}f
\end{pmatrix}
,
\]
then $\det(D_{\Lambda}(f))$ is a nonzero polynomial in $z_{0},\dotsc,z_{N-1}$ if
and only if $\det(D_{\Lambda^{\prime}}(f))$ is, where $s\in\mathbb{Z}_{N}$; indeed, since
\[\det(D_{\La^{\prime}}(f))=\det(D_{\La}(T^{-s}f)).\]
A
cyclic permutation of the coordinates results in a cyclic permutation of the
$P_{j}$, i.e. $P_{j}^{\prime}=P_{j+s}$, where the indices are taken $\bmod N$.
We define $L_{j}^{\prime}=\abs{P'_j}$, and claim that there is such a cyclic
permutation, for which

\begin{enumerate}[{\bf(I)}]
\item $L^{\prime}_{0}>0$. \label{1}
\item\label{2} The sets $P^{\prime}_{j}-j$ consist of non-negative integers; here, we
assume that all the $P^{\prime}_{j}$ are subsets of ${\left\{  {0,1,\dotsc
,N-1}\right\}  }$.
\end{enumerate}

This follows directly from the following combinatorial lemma by Dvoretzky and Motzkin, the so-called \emph{Cycle Lemma}
\cite{DM47} (this version here is actually the second part of Theorem 3 in \cite{DM47}):

\begin{lemma}
	Let $p_1p_2\dotsb p_{m+n}$ be a sequence of $m$ values equal to $+1$ and $n$ values equal to $-1$, such that $m\geq n$. Then, there are at least $m-n+1$ cyclic
	permutations of the form $q_1q_2\dotsb q_{m+n}$, such that all partial sums
	\[q_1+\dotsb+q_j\]
	are non-negative, for $1\leq j\leq m+n$.
\end{lemma}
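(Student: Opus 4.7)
My plan is to argue by induction on $n$, the number of $-1$ entries in the sequence, with base case $n=1$. (Caveat: when $n=0$, the sequence is $+^{m}$ and admits only $m$ cyclic permutations, all good; the statement ``at least $m-n+1=m+1$'' is then vacuous, so the lemma should be read as implicitly requiring $n\geq 1$, or as the trivial bound $\min(m-n+1,m+n)$. This causes no harm in the intended application, which only needs the existence of a good cyclic shift satisfying conditions (I) and (II).)

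For the base case $n=1$, each cyclic permutation places the lone $-1$ at some position $a+1\in\{1,\dotsc,m+1\}$, producing partial sums $1,2,\dotsc,a,a-1,a,\dotsc,m-1$. These are all non-negative precisely when $a\geq 1$, so exactly $m=m-n+1$ of the $m+1$ cyclic permutations are good.

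For the inductive step, assume $n\geq 2$. Since both $+1$ and $-1$ occur in the cyclic sequence, there is an adjacent pair $p_i=+1$, $p_{i+1}=-1$ (indices modulo $m+n$). A cyclic rotation of $p$ does not affect the number of good cyclic shifts, so I may place this pair at positions $m+n-1, m+n$. Deleting it yields $p'=(p_1,\dotsc,p_{m+n-2})$ with $m-1$ copies of $+1$ and $n-1$ copies of $-1$, satisfying $m-1\geq n-1\geq 1$; the inductive hypothesis provides at least $(m-1)-(n-1)+1=m-n+1$ good cyclic shifts of $p'$. For each such shift, indexed by $k'\in\{0,\dotsc,m+n-3\}$, I claim the cyclic shift of $p$ at the same position $k'$ is good as well. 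Writing $T'_j$ and $T_j$ for the partial sums of the shifted $p'$ and the shifted $p$ respectively, a direct computation gives $T_j=T'_j$ before the reinserted $(+1,-1)$ pair, $T_j=T'_{m+n-2-k'}+1\geq 1$ at the inserted $+1$, $T_j=T'_{m+n-2-k'}\geq 0$ at the inserted $-1$, and $T_j=T'_{j-2}\geq 0$ thereafter. Injectivity of $k'\mapsto k'$ then produces at least $m-n+1$ good cyclic shifts of $p$, closing the induction.

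The hard part is purely bookkeeping: one must verify that the identification of the $T_j$ with the $T'_j$ above is insensitive to where the reinserted pair actually sits in the shifted sequence (including near the wraparound), and that the rotation bringing the chosen $(+1,-1)$ pair to positions $m+n-1,m+n$ is a genuine bijection on the set of cyclic shifts of $p$. Both are routine but need to be made explicit.
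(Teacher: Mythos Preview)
The paper does not supply its own proof of this lemma; it is quoted verbatim from Dvoretzky--Motzkin \cite{DM47} and used as a black box to establish conditions (I) and (II). So there is no in-paper argument to compare against, and your proposal should be judged on its own.

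Your inductive proof is correct. The caveat about $n=0$ is well taken (the bound $m-n+1$ genuinely fails there, since the sequence has only $m$ cyclic shifts), and restricting to $n\geq1$ is harmless because the paper applies the lemma with $m=n=N\geq1$. The base case $n=1$ is computed correctly. In the inductive step, once the chosen $(+1,-1)$ pair is rotated to the last two positions, the restriction $k'\in\{0,\dotsc,m+n-3\}$ guarantees that in the shifted $p$ the reinserted pair lands at positions $m+n-1-k'$ and $m+n-k'$, both at least $2$, so there is no wraparound to worry about; your identification $T_j=T'_j$, $T'_{m+n-2-k'}+1$, $T'_{m+n-2-k'}$, $T'_{j-2}$ in the four ranges is exactly right, and the map $k'\mapsto k'$ is trivially injective into the shift indices of $p$. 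The two ``bookkeeping'' items you flag are therefore already handled.

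For context, the classical argument (and the one Dvoretzky--Motzkin give) is not inductive: one looks at the running sum $S_j=p_1+\dotsb+p_j$, locates the positions where $S_j$ attains its minimum, and shows that starting immediately after any such position yields a good cyclic shift. That argument gives the exact count in one stroke and is slightly shorter, but your induction is a perfectly valid alternative and has the pedagogical advantage of being entirely elementary and self-contained.
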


We define $K_{j}=L_{0}+\dotsb+L_{j-1}$ ($K_{0}=0$), so that
\[
P_{j}={\left\{  {K_{j},K_{j}+1,\dotsc,K_{j+1}-1}\right\}  }.
\]
The set $P_{j}-j$ is then
\[
{\left\{  {K_{j}-j,K_{j}-j+1,\dotsc,K_{j+1}-j-1}\right\}  }%
\]
The second condition above is thus equivalent to
\begin{equation}\label{partialsums}
L_0+\dotsb+L_{j-1}\geq j.
\end{equation}
Next, we construct the following sequence of $\pm1$ based on the data given by $L_0,\dotsc,L_{N-1}$, as follows. We begin with a
string of $L_0$ $+1$'s, followed by a single $-1$, then a string of $L_1$ $+1$'s, followed by a single $-1$, $\dotsc$,
followed by a string of $L_{N-1}$ $+1$'s, and finally ending with a $-1$. Since $L_0+\dotsb+L_{N-1}=N$, the number of $+1$
is equal to the number of $-1$, therefore by the Cycle Lemma, there is a cyclic permutation, whose partial sums are all
non-negative. We can always choose such a permutation to end with a $-1$, i.e. it begins with a string of $L_s$ $+1$'s, for
some $s$. We then put $L'_j=L'_{j+s}$ (here the indices are taken $\bmod N$), so that $L'_0=L'_s$, and the conclusion of
the Cycle Lemma then is equivalent to the second condition, that is
\[L'_0+\dotsb+L'_{j-1}\geq j\]
for all $j$.



Without loss of generality, we will assume henceforth that conditions \eqref{1} and \eqref{2}
are satisfied for $\Lambda$ itself. We recall that the CI monomial corresponds to the standard $P_j$ partition. 

We note that any desired partition has the form $\sigma(P_{0}%
),\dotsc,\sigma(P_{N-1})$, where $P_{0},\dotsc,P_{N-1}$ is the above fixed
partition, which gives the CI monomial, and $\sigma$ runs through a quotient
subgroup of the permutation group $S_{N}$, namely $S_{N}/\Gamma$, where
$\Gamma$ the subset of permutations that leave each $P_{j}$ invariant (i.e.
$\sigma(P_{j})=P_{j}$ for all $j$); we will call such permutations
\emph{trivial}.

Therefore, every partition of ${\left\{  {0,1,\dotsc,N-1}\right\}  }$ into
$A_{j}$ with $\abs{A_j}=L_{j}$ is obtained through a permutation $\sigma\in
S_{N}/\Gamma$, by setting $A_{j}=\sigma(P_{j})$. With every such partition (or
equivalently, element of $S_{N}/\Gamma$) we associate a discrete random
variable $X_{\sigma}$ with values at ${\left\{  {0,1,\dotsc,N-1}\right\}  }$
satisfying
\[
\mathbb{P} [X_{\sigma}=j]=\frac{a_{j}}{N},
\]
where $Z^{\sigma}=z_{0}^{a_{0}}z_{1}^{a_{1}}\dotsb z_{N-1}^{a_{N-1}}$ is the
monomial associated with the partition $A_{0},\dotsc,A_{N-1}$. We put
$X=X_{\iota}$, where $\iota$ is the identity partition; in other words, $X$ is
the random variable associated with the CI monomial, which will be denoted by
$Z$. We will show that the CI monomial appears uniquely by showing that the
identity is the unique minimizer of the quantity $\E[X_{\sigma}^{2}]$ in
$S_{N}/\Gamma$\footnote{We note that the remaining of the proof follows in almost identical
fashion as the proof of Theorem 4.3 from \cite{M15}; we include the full proof for completeness.}.

To do so, define the function $g$ on $\set{0,1,\dotsc,N-1}$ with the property
\[g^{-1}(j)=P_j,\	\;\;\; 0\leq j\leq N-1.\]
Obviously, $g$ is increasing, and is constant when restricted to each of the $P_j$. Conditions \eqref{1} and \eqref{2}
above imply that $n-g(n)\geq0$, for all $n\in\set{0,\dotsc,N-1}$. We note that the indices appearing at the CI monomial
are exactly the integers $n-g(n)$, counting multiplicities. On the other hand, the indices appearing in the monomial
$Z^{\si}$ form precisely the union of \emph{multisets} (i.e. we count an integer multiple times, if it appears on many
sets) $\si(P_j)-j$, where the latter are taken $\bmod N$. So, for $0\leq n\leq N-1$, the corresponding index for $Z^{\si}$
is $\si(n)-g(n)$. Since it is less than $N$ in absolute value, this index \emph{as an integer} in $\set{0,\dotsc,N-1}$ must either be
$\si(n)-g(n)$ or $\si(n)-g(n)+N$. We thus define
\begin{equation}
\si'(n)=\begin{cases}
\si(n),\	\;\;\; & \text{ if } \si(n)-g(n)\geq0\\
\si(n)+N,\	\;\; & \text{ otherwise}.
\end{cases}
\end{equation}
Let $C_1$ be the set of $n$ for which $\si(n)-g(n)\geq0$ holds, and $C_2$ its complement in $\set{0,\dotsc,N-1}$. Define
\[f:\set{0,\dotsc,N-1}\longrightarrow \si(C_1)\cup(\si(C_2)+N)\]
to be strictly increasing ($f$ is obviously unique), so that $f(n)\geq n$, for all $n$. Since $ \si(C_1)\cup(\si(C_2)+N)$
is also the range of $\si'$, there must be a permutation $\tau$ of this set, such that
\[\si'(n)=\tau(f(n)),\	\;\;\; 0\leq n\leq N-1.\]
Now, since $f(n)\geq n$, we must have $f(n)-g(n)\geq n-g(n)\geq0$ for $0\leq n\leq N-1$, hence
\[\E[X^2]\leq\frac{1}{N}\sum_{n=0}^{N-1}(f(n)-g(n))^2.\]
On the other hand,
\begin{eqnarray*}
\E[X_{\si}^2]-\frac{1}{N}\sum_{n=0}^{N-1}(f(n)-g(n))^2 &=& \frac{1}{N}\sum_{n=0}^{N-1}(\si'(n)-g(n))^2-\frac{1}{N}\sum_{n=0}^{N-1}(f(n)-g(n))^2\\
&=& \frac{1}{N}\sum_{n=0}^{N-1}(\tau(f(n))-g(n))^2-\frac{1}{N}\sum_{n=0}^{N-1}(f(n)-g(n))^2\\
&=& \frac{2}{N}\sum_{n=0}^{N-1}f(n)g(n)-\frac{2}{N}\sum_{n=0}^{N-1}\tau(f(n))g(n)\\
&\geq& 0,
\end{eqnarray*}
where the latter inequality is justified by the \emph{rearrangement inequality}. Thus,
\[\E[X^2]\leq\frac{1}{N}\sum_{n=0}^{N-1}(f(n)-g(n))^2\leq\E[X_{\si}^2],\]
for all permutations $\si$. If $C_2\neq\varnothing$, then for some $n,$ we have $f(n)-g(n)>n-g(n)$, yielding strict
inequality between $\E[X^2]$ and $\E[X_{\si}^2]$. So, $\E[X^2]$ can only equal $\E[X_{\si}^2]$ when $C_2=\varnothing$, which
implies that $f$ must be the identity on $\set{0,\dotsc,N-1}$. It also implies that $\si'(n)=\si(n)$ for all $n$, which yields
\begin{eqnarray*}
	\E[X_{\si}^2]-\E[X^2] &=& \frac{1}{N}\sum_{n=0}^{N-1}(\si(n)-g(n))^2-\frac{1}{N}\sum_{n=0}^{N-1}(n-g(n))^2\\
	&=& \frac{2}{N}\sum_{n=0}^{N-1}ng(n)-\frac{2}{N}\sum_{n=0}^{N-1}\si(n)g(n)\\
	&\geq& 0,
\end{eqnarray*}
again by the rearrangement inequality; we note that equality can only occur when $\si$ is trivial, i.e. it leaves every
$P_j$ (the level sets of $g$) invariant. This clearly shows that the expected value of $X^2$ is a unique statistical property
for the CI monomial with respect to the given partition. Therefore, the CI monomial appears uniquely, and has a
nonzero coefficient by hypothesis in the expansion of $\det(D_{\L}(f))$, thus rendering the latter a nonzero polynomial in
$z_0,\dotsc,z_{N-1}$. The result now follows by observing that $\mc{M}_{\ell}T^kf$, $0\leq \ell\leq M-1$, $0\leq k\leq N-1$
produces a full spark frame as long as $f$ avoids the finitely many varieties defined by the zero sets of the polynomials
$\det(D_{\L}(f))$. \hspace{2.4cm}
$\square$

\subsubsection{A sufficient condition for tightness}

Next, we will examine when the frame $$\set{\mc{M}_{\ell}T^kf:0\leq\ell\leq M-1, 0\leq k\leq N-1}$$ is tight. 

\begin{theorem}
 Let $\mc{M}=\bra{m_{k\ell}}$ be a $N\times M$ matrix, and denote 
 \[\mc{M}_{\ell}=\mathrm{diag}(m_{0\ell},m_{1\ell},\dotsc,m_{N-1,\ell}).\] 
 Then, for a nonzero $f\in\C^N$, the frame $\set{\mc{M}_{\ell}T^kf:0\leq\ell\leq M-1, 0\leq k\leq N-1}$ is tight if the columns of $\mc{M}$ form a tight frame.
\end{theorem}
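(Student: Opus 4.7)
The plan is to compute the frame operator directly and show that it equals a scalar multiple of the identity. Recall that for a finite sequence of vectors $(v_i)$, the frame operator is $S=\sum_i v_iv_i^*$, and tightness is equivalent to $S$ being a scalar multiple of $I$. So I would begin by writing
\[
S \;=\; \sum_{\ell=0}^{M-1}\sum_{k=0}^{N-1}(\mathcal{M}_{\ell}T^{k}f)(\mathcal{M}_{\ell}T^{k}f)^{*}
\;=\;\sum_{\ell=0}^{M-1}\mathcal{M}_{\ell}\,P\,\mathcal{M}_{\ell}^{*},
\]
where $P=\sum_{k=0}^{N-1}T^{k}ff^{*}T^{-k}$, using that $T$ is unitary so that $(T^{k})^{*}=T^{-k}$, and pulling the $\ell$-independent shift outside the inner sum.

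Next I would analyze $P$. Its $(i,j)$ entry is $\sum_{k}f_{i-k}\overline{f_{j-k}}$ (indices mod $N$), which depends only on $i-j$; in particular, $P$ is a circulant matrix. The point I would emphasize is its diagonal: for every $i$,
\[
P_{ii}=\sum_{k=0}^{N-1}|f_{i-k}|^{2}=\|f\|^{2}.
\]
The off-diagonal entries need not vanish — they are the autocorrelations of $f$ — but that will not matter for the conclusion.

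Now I would use that the $\mathcal{M}_{\ell}$ are diagonal, so that
\[
\bigl(\mathcal{M}_{\ell}P\mathcal{M}_{\ell}^{*}\bigr)_{ij}=m_{i\ell}\,P_{ij}\,\overline{m_{j\ell}}.
\]
Summing over $\ell$ and invoking the hypothesis that the columns of $\mathcal{M}$ form a tight frame for $\mathbb{C}^{N}$ — that is, $\sum_{\ell}m_{i\ell}\overline{m_{j\ell}}=A\,\delta_{ij}$ for some constant $A>0$ — the $(i,j)$ entry of $S$ becomes $A\delta_{ij}P_{ij}$. The off-diagonal entries of $P$ are annihilated by the Kronecker delta, and the diagonal entries are all equal to $\|f\|^{2}$. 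Therefore $S=A\|f\|^{2}I$, so the frame is tight with bound $A\|f\|^{2}$.

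There is essentially no obstacle here: the argument is a short bilinear calculation, and the whole point is the happy interaction between diagonal conjugation and the constant diagonal of the cyclic average $P$. The only item requiring care is bookkeeping for the indexing of $T^{k}$ and making explicit that the tight-frame condition on the columns of $\mathcal{M}$ is being used exactly to kill the off-diagonal contributions of $P$.
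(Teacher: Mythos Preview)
Your proof is correct and is essentially the same computation as the paper's: both verify that $\Phi\Phi^{*}$ is a scalar multiple of the identity, using that the rows of $\mathcal{M}$ are orthogonal with equal norm to kill the off-diagonal contributions and that the diagonal of the cyclic average equals $\|f\|^{2}$. The paper phrases this as computing the row norms and pairwise inner products of the synthesis matrix $\Phi$, while you organize it via the intermediate circulant $P=\sum_{k}T^{k}ff^{*}T^{-k}$; the substance is identical.
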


\begin{proof}
 It is known that the columns of a matrix $\mc{M}$ form a tight frame if and only if $\mc{M}\mc{M}^*$ is a multiple of the identity. If $\mc{M}^k$ denotes the $k$th row of $\mc{M}$,
 this is equivalent to the fact that the rows of $\mc{M}$ are orthogonal and have the same norm. Without loss of generality, we may assume $\norm{\mc{M}^k}=1$ for all $k$.
 Let $\Phi$ be the $N\times MN$ matrix whose columns are $\mc{M}_{\ell}T^kf$. Denote by $\Phi^k$ the $k$th row of $\Phi$. We have
 \[\norm{\Phi^k}^2=\norm{\mc{M}^k}^2\norm{f}^2=\norm{f}^2\]
 and
 \[\scal{\Phi^j,\Phi^k}=\scal{\mc{M}^j,\mc{M}^k}\sum_{m\in\Z_N}\scal{f,T^mf}=0,\]
 as desired.
\end{proof}

\subsection{A short overview of the theory of Mackey}\label{Mackey}

In order to present the proof of Theorem \ref{ZnH}, we need to describe (up to unitary equivalence) the set of all irreducible representations of the semi-direct product group $\mathbb{Z}_N\rtimes H.$ To this end, we will make use of Mackey's analysis \cite[Chapter 6]{folland2016course}.

Let $G=AH$ be a finite group where $A,H$ are subgroups, $A$ is normal in $G$,
abelian and $A\cap H=\left\{  e\right\}  $. In other words, $G$ is isomorphic
to the semi-direct product group $A\rtimes H.$ Since $A$ is abelian, its
unitary irreducible representations are all homomorphisms from $A$
into the torus group.

Let $\widehat{A}$ be the unitary dual of $A.$ That is, the collection of all
non-equivalent irreducible representations of $A.$ The conjugation action of
$G$ on $A$ induces an action on $\widehat{A}$ defined as follows. For $g\in
G,\chi\in\widehat{A}$ and $x\in A,$%
\[
\left[  g\cdot\chi\right]  \left(  x\right)  =\chi\left(  g^{-1}xg\right)  .
\]
Let $\Sigma\subseteq\widehat{A}$ be a transversal for the $G$-orbits in
$\widehat{A}.$ For each $\chi\in\Sigma,$ let $H_{\chi}$ be the stabilizer of
$\chi$ in $H.$ That is,
\[
H_{\chi}=\left\{  h\in H:h\cdot\chi=\chi\right\}  .
\]
Put
\[
G_{\chi}=AH_{\chi}.
\]
The map $gG_{\chi}\mapsto g\cdot\chi$ defines a bijection between $G/G_{\chi}
$ and the orbit of $\chi.$ Moreover, $G_{\chi}$ is the stabilizer of $\chi$ in
$G,$ and each unitary character $\chi$ can be extended to a unitary
representation $\omega_{\chi}$ of $G_{\chi}$ as follows. Given $n\in A$ and
$h\in H_{\chi},$%
\[
\omega_{\chi}\left(  nh\right)  =\chi\left(  n\right)  .
\]
Indeed, for $n,m\in A$ and $h,k\in H_{\chi},$ we verify that $\omega_{\chi
}\left(  \left(  nh\right)  \left(  mk\right)  \right)  =\omega_{\chi}\left(
nh\right)  \omega_{\chi}\left(  mk\right)  $ as follows
\begin{align*}
\omega_{\chi}\left(  nhmk\right)   &  =\omega_{\chi}\left(  \left(
nhmh^{-1}\right)  hk\right) \\
&  =\chi\left(  nhmh^{-1}\right) \\
&  =\chi\left(  n\right)  \chi\left(  hmh^{-1}\right) \\
&  =\chi\left(  n\right)  \left[  h^{-1}\cdot\chi\right]  \left(  m\right)  .
\end{align*}
Since $h^{-1}\cdot\chi=\chi,$%
\[
\omega_{\chi}\left(  nhmk\right)  =\chi\left(  n\right)  \chi\left(  m\right)
=\omega_{\chi}\left(  nh\right)  \omega_{\chi}\left(  mk\right)  .
\]
Next, given a unitary irreducible representation $\rho$ of $H_{\chi}$ acting
in $\mathcal{H}_{\rho},$ the tensor representation $\omega_{\chi}\otimes\rho$
is an irreducible representation of the subgroup $G_{\chi}.$ Letting
\[
\pi_{\rho,\chi}=\mathrm{ind}_{G_{\chi}}^{G}\left(  \omega_{\chi}\otimes
\rho\right)
\]
be the representation obtained by inducing $\omega_{\chi}\otimes\rho$ from
$G_{\chi}$ to $G,$ according to the theory of Mackey, $\pi_{\rho,\chi}$ is
necessarily irreducible \cite[Chapter 6]{folland2016course}. In fact, all
irreducible representations of $G$ arise in this fashion. Precisely, for each
$\chi\in\Sigma,$ letting $\widehat{H_{\chi}}$ be a collection consisting of
non-equivalent unitary irreducible representations of $H_{\chi},$ every
unitary irreducible representation of $G=AH$ is equivalent to one element of
the following set%
\[%
{\displaystyle\bigcup\limits_{\chi\in\Sigma}}
\left\{  \pi_{\rho,\chi}:\rho\in\widehat{H_{\chi}}\right\}  .
\]
Moreover, each $\pi_{\rho,\chi}$ is realized as acting in the space
$\mathcal{F}_{\rho,\chi}$ consisting of functions $f:G\rightarrow
\mathcal{H}_{\rho}$ satisfying the covariance relation
\begin{equation}
f\left(  xz\right)  =\left(  \omega_{\chi}\otimes\rho\right)  \left(
z^{-1}\right)  f\left(  x\right)  \text{ where }z\in G_{\chi}.
\label{covariance}%
\end{equation}
The vector space $\mathcal{F}_{\rho,\chi}$ is naturally endowed with the inner
product
\begin{equation}
\left\langle v,w\right\rangle _{\mathcal{F}_{\rho,\chi}}=\sum_{gH\in
G/G_{\chi}}\left\langle v\left(  g\right)  ,w\left(  g\right)  \right\rangle
_{\mathcal{H}_{\rho}}. \label{innerproduct}%
\end{equation}
Note that the covariance relation described in (\ref{covariance})\ implies
that the inner product (\ref{innerproduct}) is indeed well defined. \newline

The action of $\pi_{\rho,\chi}$ is defined as follows. For $f\in
\mathcal{F}_{\rho,\chi}$ and $g,z\in G,$%
\[
\pi_{\rho,\chi}\left(  g\right)  f\left(  z\right)  =f\left(  g^{-1}z\right)
.
\]
\begin{remark} Note that in cases where the subgroup $H_{\chi}$ is the singleton containing
the identity in $H$, the extension representation $\omega_{\chi}$ is trivial
in the sense that $\omega_{\chi}=\chi.$ It is thus convenient to realize the
action of $\mathrm{ind}_{A}^{G}\left(  \chi\right)  $ as acting in
$l^{2}\left(  H\right)  $ as follows. Since $H\subseteq G$ is a transversal
for $G/A,$ given $f\in\mathcal{F}_{1,\chi},$ we have
\[
f\left(  ha\right)  =\chi\left(  a^{-1}\right)  f\left(  h\right)  \text{
\ \ \ \ \ }\left(  a,h\right)  \in A\times H.
\]
Therefore, $\mathcal{F}_{1,\chi}$ can be naturally identified with
$l^{2}\left(  H\right)  $, and the following is immediate. For $f\in
\mathcal{F}_{1,\chi},$ $a\in A,$
\begin{align*}
\left[  \pi_{1,\chi}\left(  a\right)  f\right]  \left(  h\right)   &  =\left[
\mathrm{ind}_{A}^{G}\left(  \chi\right)  \left(  a\right)  f\right]  \left(
h\right) \\
&  =f\left(  a^{-1}h\right) \\
&  =f\left(  hh^{-1}a^{-1}h\right) \\
&  =\chi\left(  h^{-1}ah\right)  f\left(  h\right)
\end{align*}
and for $k\in H,$%
\[
\left[  \pi_{1,\chi}\left(  k\right)  f\right]  \left(  h\right)  =\left[
\mathrm{ind}_{A}^{G}\left(  \chi\right)  \left(  k\right)  f\right]  \left(
h\right)  =f\left(  k^{-1}h\right)  .
\]

In summary, elements of the normal part of the group act diagonally, while the restriction of $ \pi_{1,\chi}$ to $H$ is unitarily equivalent to the left regular representation of $H.$ \end{remark}

\subsection{Proof of Theorem \ref{ZnH}}

Let $\pi$ be an irreducible and unitary representation of $\mathbb{Z}%
_{N}\rtimes H$ acting in a Hilbert space $\mathcal{H}_{\pi}.$ Since $\pi$ is
irreducible \cite[Theorem 3]{waldron2018group} it is known that for any
nonzero vector $f\in\mathcal{H}_{\pi},$ $\left\{  \pi\left(  g\right)  f:g\in
G\right\}  $ is a tight frame for $\mathcal{H}_{\pi}$.

To prove Theorem \ref{ZnH}, it is enough to show that there exists a nonzero
vector whose orbit is a full spark frame in $\mathcal{H}_{\pi}$. Since the
case where $\pi$ is a unitary character is trivially true, we shall not
address it, and we will instead focus on representations of dimensions
strictly larger than one.

Since $N$ is assumed to be prime, the automorphism group of $\mathbb{Z}%
_{N}=\left\{  0,1,\cdots,N-1\right\}  $ is the multiplicative modular group
$\mathbb{Z}_{N}^{\times}=\left\{  1,\cdots,N-1\right\}  $ which is isomorphic
to $\mathbb{Z}_{N-1}.$

Let $G=\mathbb{Z}_{N}\rtimes\mathbb{Z}_{N}^{\times}$ be the semi-direct product
group endowed with the following multiplication law. For $x,y\in\mathbb{Z}%
_{N}=\left\{  0,1,\cdots,N-1\right\}  $ and $a,b\in\mathbb{Z}_{N}^{\times
}=\left\{  1,\cdots,N-1\right\}  ,$
\begin{equation}
\left(  x,a\right)  \left(  y,b\right)  =\left(  x+ay,ab\right)
.\label{product}%
\end{equation}
For a unitary character,
\[
\mathbb{\chi}_{\xi}:x\mapsto\exp\left(  \frac{2\pi i\xi x}{N}\right)  \text{
\ \ \ \ }\left(  \xi\in\mathbb{Z}_{N}\right)
\]
of $\mathbb{Z}_{N},$ and $a\in\mathbb{Z}_{N}^{\times},$ it is easy to verify
that
\[
a\cdot\mathbb{\chi}_{\xi}=\mathbb{\chi}_{a^{-1}\xi}.
\]
Under the action of $G,$ the unitary dual of $\mathbb{Z}_{N}$ is partitioned
into two classes of orbits:%
\[
\left\{  \mathbb{\chi}_{0}\right\}  \overset{\cdot}{\cup}\left\{
\mathbb{\chi}_{1},\dotsc,\mathbb{\chi}_{N-1}\right\}  .
\]
$\Sigma=\left\{  \mathbb{\chi}_{0},\mathbb{\chi}_{1}\right\}  $ is a
transversal for the orbits of $G$ in the unitary dual of $\mathbb{Z}_{N}.$
Moreover, the stabilizer subgroup of $\mathbb{\chi}_{0}$ coincides with $G$
while the stabilizer subgroup corresponding to the representation $\chi
_{1}\mathbb{\ }$is trivial. Since $\mathbb{Z}_{N}^{\times}$ is abelian, it
follows from Mackey's analysis that up to unitary equivalence, the unitary
dual of $G$ is parametrized by the set%
\begin{equation}
\left\{  \omega_{\mathbb{\chi}_{0}}\otimes\rho:\rho\in\widehat{\mathbb{Z}%
_{N}^{\times}}\right\}  \cup\left\{  \pi=\mathrm{ind}_{\mathbb{Z}_{N}%
\rtimes\left\{  \left(  0,1\right)  \right\}  }^{G}\left(  \mathbb{\chi}%
_{1}\right)  \right\}  .
\end{equation}
On the one hand, all representations in $\left\{  \omega_{\mathbb{\chi}_{0}%
}\otimes\rho:\rho\in\widehat{\mathbb{Z}_{N}^{\times}}\right\}  $ are
characters, and there are $\left(  N-1\right)  $-many of them. On the other,
every irreducible representation of $G$ of dimension larger than one is
unitarily equivalent to $ \pi=\mathrm{ind}_{\mathbb{Z}_{N}%
\rtimes\left\{  \left(  0,1\right)  \right\}  }^{G}\left(  \mathbb{\chi}%
_{1}\right).$  The
representation $\pi$ is realized as acting in $l^{2}\left(  \mathbb{Z}%
_{N}^{\times}\right)  $ as follows. Letting $a$ be a generator of the
multiplicative group $\mathbb{Z}_{N}^{\times}$ and $\omega=\exp\left(
\frac{2\pi i}{N}\right)  ,$
\[
\pi\left(  \gamma\right)  f\left(  a^{j}\right)  =\left\{
\begin{array}
[c]{c}%
\omega^{a^{-j}x}f\left(  a^{j}\right)  \text{ if }\gamma=\left(  x,1\right)
\\
f\left(  a^{j-\kappa}\right)  \text{ if }\gamma=\left(  0,a^{\kappa}\right)
\end{array}
\right.
\]
for $f\in l^{2}\left(  \mathbb{Z}_{N}^{\times}\right)  =\mathbb{C}^{N-1}.$
Next, the matrix $\mathcal{M}$ defined in Theorem \ref{first} takes the form
\begin{equation}
\mathcal{M}=\mathcal{M}_{N}\left(  \omega\right)  =\left(
\begin{array}
[c]{ccccc}%
1 & \omega & \omega^{2} & \cdots & \omega^{\left(  N-1\right)  }\\
1 & \omega^{a^{-1}} & \omega^{2a^{-1}} & \cdots & \omega^{\left(  N-1\right)
a^{-1}}\\
1 & \omega^{a^{-2}} & \omega^{2a^{-2}} & \cdots & \omega^{\left(  N-1\right)
a^{-2}}\\
\vdots & \vdots & \vdots & \ddots & \vdots\\
1 & \omega^{a^{-\left(  N-2\right)  }} & \omega^{2a^{-\left(  N-2\right)  }} &
\cdots & \omega^{\left(  N-1\right)  a^{-\left(  N-2\right)  }}%
\end{array}
\right)  =\left(  \omega^{ka^{-j}}\right)  _{0\leq j\leq N-2,0\leq k\leq N-1}.
\end{equation}
Precisely, $\mathcal{M}$ is an $\left(  N-1\right)  \times N$ matrix obtained
by arranging the diagonals of $\pi\left(  x,1\right)  ,x\in\mathbb{Z}_{N}$ in columns.

Suppose on the other hand that $H$ is a proper subgroup of $\mathbb{Z}%
_{N}^{\times}.$ Clearly, the order of $H$ must strictly divide $N-1$. Let
$\Gamma=\mathbb{Z}_{N}\rtimes H$ be the semi-direct product group with a
multiplication law defined similarly to (\ref{product}). Note that since $N$
is a prime number, every nonzero element of $\mathbb{Z}_{N}$ is a non--zero
divisor and as such, the stabilizer subgroup of $\mathbb{\chi}_{\xi}$ for
$\xi\in\mathbb{Z}_{N}\backslash\left\{  0\right\}  $ is the trivial subgroup
of $H.$ In other words, if $\pi$ is an irreducible representation of $\Gamma$
which is not a character, according to the theory of Mackey, there exists
$\xi\in\mathbb{Z}_{N}\backslash\left\{  0\right\}  $, such that $\pi$ is
unitarily equivalent to a representation induced by the unitary character
\[
\mathbb{\chi}_{\xi}:\mathbb{Z}_{N}
\ni x\mapsto\exp\left(  \frac{2\pi
i\xi x}{N}\right)  \text{ \ \ \ \ \ }\left(  \xi\in\mathbb{Z}_{N}%
\backslash\left\{  0\right\}  \right)  .
\]
Such a representation may be modeled as acting in $l^{2}\left(  H\right)  $ as
follows:
\[
\pi_{\xi}\left(  x,s\right)  f\left(  h\right)  =\omega_{\xi}^{h^{-1}%
x}f\left(  s^{-1}h\right)
\]
for $\omega_{\xi}=\exp\left(  \frac{2\pi\xi i}{N}\right)  .$ \\It is perhaps
worth noting that if $\pi$ is an irreducible representation of $\mathbb{Z}%
_{N}\rtimes\mathbb{Z}_{N}^{\times}$ acting on $\mathcal{H}_{\pi}$ and if
additionally, $\pi$ is not a character, then
\[
\dim\left(  \mathcal{H}_{\pi}\right)  =N-1>\dim\left(  l^{2}\left(  H\right)
\right)  =\left\vert H\right\vert .
\]
This observation implies that the restriction of an irreducible representation
of $\mathbb{Z}_{N}\rtimes\mathbb{Z}_{N}^{\times}$ which is not a character to
its subgroup $\Gamma$ can never be equivalent to an irreducible representation
of $\Gamma.$ In any case, the matrix $\mathcal{M}$ as defined in Theorem
\ref{first} takes the form
\[
\left(  \omega_{\xi}^{ka^{-j}}\right)  _{0\leq j\leq\left\vert H\right\vert
-1,0\leq k\leq N-1}.
\]
In either case any square submatrix of $\mathcal{M}$ satisfies the sufficient
conditions described in the following result which was established in
\cite[Theorem 6]{evans1976generalized}.

\begin{proposition}
\label{IsaacEvans} Let $N$ be a prime number and let $\omega$ be a $N$-th root
of unity. Suppose that $a_{1},\cdots,a_{n}\in\mathbb{Z}$ are pairwise
incongruent modulo $N$. Assuming the same for $b_{1},\cdots,b_{n}\in
\mathbb{Z}$, the matrix $\left(  \omega^{a_{j}b_{k}}\right)  _{1\leq k,j\leq
n}$ is invertible.
\end{proposition}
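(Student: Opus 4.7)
The plan is to argue by contradiction using $\pi$-adic analysis in the ring of integers $R=\mathbb{Z}[\omega]$, where $\pi:=1-\omega$ generates the unique prime of $R$ above $N$ (so that $R/(\pi)\cong\mathbb{F}_N$). The cases $n=1$ and $n=N$ would be handled directly: the first is trivial, and the second reduces to the classical nonvanishing of the DFT matrix of prime order (a Vandermonde in distinct roots of unity). So assume $2\leq n\leq N-1$, in which case $R/(\pi^n)$ is an $\mathbb{F}_N$-vector space of dimension $n$ with basis $\{1,\pi,\pi^2,\ldots,\pi^{n-1}\}$.

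Suppose for contradiction $\det(\omega^{a_jb_k})=0$. Then there exist $c_1,\ldots,c_n\in R$ with some $c_j\not\in\pi R$ and $\sum_j c_j\omega^{a_jb_k}=0$ for each $k=1,\ldots,n$. Using $\omega=1-\pi$ and the expansion $\omega^a=\sum_{s\geq 0}(-\pi)^s\binom{a}{s}$, and writing each $c_j\equiv\sum_{i=0}^{n-1}c_{j,i}\pi^i\pmod{\pi^n}$ with $c_{j,i}\in\mathbb{F}_N$, I would extract, for each level $m=0,1,\ldots,n-1$ and each $k$, the coefficient of $\pi^m$, yielding the $\mathbb{F}_N$-valued equation
\[
E_m(\bar b_k):=\sum_{i+s=m}(-1)^s\sum_j c_{j,i}\binom{a_jb_k}{s}\equiv 0\pmod N.
\]

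The crucial point is that $E_m$, viewed as a polynomial in $b$, has degree at most $m$, since $\binom{a_jb}{s}$ has degree exactly $s$ in $b$. As $E_m(\bar b_k)=0$ for $n\geq m+1$ distinct $\bar b_k\in\mathbb{F}_N$, the polynomial $E_m$ must vanish identically in $\mathbb{F}_N[b]$. Extracting the coefficient of $b^m$, which arises only from the unique term $i=0$, $s=m$ (with leading coefficient $\tfrac{a_j^m}{m!}$), and noting $m!\in\mathbb{F}_N^{\times}$ since $m<N$, I obtain
\[
\sum_j c_{j,0}\,\bar a_j^{\,m}\equiv 0\pmod N,\qquad m=0,1,\ldots,n-1.
\]

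The coefficient matrix $(\bar a_j^m)$ is then a Vandermonde matrix with pairwise distinct nodes $\bar a_j\in\mathbb{F}_N$, hence invertible, forcing $c_{j,0}=0$ for all $j$. This contradicts the normalization that some $c_j\not\in\pi R$.

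The main obstacle is the clean isolation of the leading $b^m$ coefficient in $E_m$: it is essential that $\binom{a_jb}{s}$ has degree exactly $s$ in $b$, so that only the pair $(i,s)=(0,m)$ contributes to the $b^m$ term, decoupling the relations on $(c_{j,0})_j$ from those involving higher $\pi$-adic digits. Subsidiary care is required to justify the unique $\mathbb{F}_N$-expansion of elements of $R/(\pi^n)$ in powers of $\pi$, which is the reason for treating $n=N$ (where $p$ becomes visible as $\pi^{N-1}$) as a separate case.
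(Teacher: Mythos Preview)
The paper does not actually prove this proposition; it is quoted verbatim as \cite[Theorem~6]{evans1976generalized} and used as a black box in the proof of Theorem~\ref{ZnH}. So there is no ``paper's own proof'' to compare against.

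Your argument is a correct self-contained proof, and it is essentially the classical $(1-\omega)$-adic proof of Chebotar\"ev's theorem on minors of the DFT matrix. The key steps all check out: for $2\le n\le N-1$ the ring $R/(\pi^n)$ is genuinely an $\mathbb{F}_N$-vector space with basis $1,\pi,\dots,\pi^{n-1}$, since $v_\pi(N)=N-1\ge n$ forces $N\equiv 0$ there; the expansion $\omega^{a}=\sum_{s}(-1)^s\binom{a}{s}\pi^s$ and the coefficient extraction are legitimate because $\binom{a}{s}\bmod N$ depends only on $a\bmod N$ (Lucas, or directly from uniqueness of the $\pi$-adic digits of $\omega^a$); the degree bound $\deg_b E_m\le m$ is correct since $s!$ is a unit in $\mathbb{F}_N$ for $s<N$, and only the summand $(i,s)=(0,m)$ contributes to the top coefficient; and the closing Vandermonde step over $\mathbb{F}_N$ is clean. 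Your separate treatment of $n=N$ is also appropriate, since $R/(\pi^N)$ is no longer an $\mathbb{F}_N$-algebra. One should of course read ``$N$-th root of unity'' as \emph{primitive}; otherwise the statement fails trivially.
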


Appealing to Theorem \ref{first}, we conclude that if $H$ is
any subgroup of the automorphism group of $\mathbb{Z}_{N},$ $N$ is a prime
natural number, and $\pi$ is a unitary irreducible representation of
$\mathbb{Z}_{N}\rtimes H$ acting in some Hilbert space $\mathbb{C}^{M}$, then
for almost every vector $f\in\mathbb{C}^{M}$ (with respect to the Lebesgue
measure on $\mathbb{C}^{M}$) the orbit of $f$ is an equal norm tight frame which is also
full spark.

\begin{example}
	Let $	G=\mathbb{Z}_{5}\rtimes\mathbb{Z}_{4}=\left\{  0,1,2,3,4\right\}
	\rtimes\left\{  1,2,3,4\right\}.$ 	Appealing to Mackey's analysis, the
	unitary dual of $G$ contains (up to equivalence) four unitary characters and one irreducible
	representation $\pi$ acting in $\mathbb{C}^{4}.$  $\pi=\mathrm{ind}_{\mathbb{Z}_{5}\rtimes\left\{
		1\right\}  }^{G}\left(  \chi_{1}\right)$ is realized as acting in
	$\mathbb{C}^{4}$ as follows:
	\[
	\pi\left(  0,2\right)  =T=\left(
	\begin{array}
	[c]{cccc}%
	0 & 1 & 0 & 0\\
	0 & 0 & 1 & 0\\
	0 & 0 & 0 & 1\\
	1 & 0 & 0 & 0
	\end{array}
	\right)  \text{ and }\pi\left(  1,1\right)  =D=\left(
	\begin{array}
	[c]{cccc}%
	e^{\frac{2\pi i}{5}} & 0 & 0 & 0\\
	0 & e^{\frac{2\pi i3}{5}} & 0 & 0\\
	0 & 0 & e^{\frac{2\pi i3^{2}}{5}} & 0\\
	0 & 0 & 0 & e^{\frac{2\pi i3^{3}}{5}}%
	\end{array}
	\right)  .
	\]
	According to Theorem \ref{ZnH}, since $5$ is a prime number, we know that
	$\pi$ is full spark. To construct a fullspark frame, we proceed as
	follows. Let $v\in\mathbb{C}^{4}$ such that $v_{k}=t^{k^2}$ for
	$k\in\left\{  0,\cdots,3\right\}.$ The matrix representation of the corresponding analysis operator is of the form $C=\left(
\begin{array}{c}
 A \\ \hline
 B
\end{array}
\right)$ where

$$A=\left(
\begin{array}{cccc}
 1 & t & t^4 & t^9 \\
 t & t^4 & t^9 & 1 \\
 t^4 & t^9 & 1 & t \\
 t^9 & 1 & t & t^4 \\
 \omega  & t \omega ^3 & t^4 \omega ^4 & t^9 \omega ^2 \\
 t \omega  & t^4 \omega ^3 & t^9 \omega ^4 & \omega ^2 \\
 t^4 \omega  & t^9 \omega ^3 & \omega ^4 & t \omega ^2 \\
 t^9 \omega  & \omega ^3 & t \omega ^4 & t^4 \omega ^2 \\
 \omega ^2 & t \omega ^6 & t^4 \omega ^8 & t^9 \omega ^4 \\
 t \omega ^2 & t^4 \omega ^6 & t^9 \omega ^8 & \omega ^4
\end{array}
\right),B=\left(
\begin{array}{cccc}
 t^4 \omega ^2 & t^9 \omega ^6 & \omega ^8 & t \omega ^4 \\
 t^9 \omega ^2 & \omega ^6 & t \omega ^8 & t^4 \omega ^4 \\
 \omega ^3 & t \omega ^9 & t^4 \omega ^{12} & t^9 \omega ^6 \\
 t \omega ^3 & t^4 \omega ^9 & t^9 \omega ^{12} & \omega ^6 \\
 t^4 \omega ^3 & t^9 \omega ^9 & \omega ^{12} & t \omega ^6 \\
 t^9 \omega ^3 & \omega ^9 & t \omega ^{12} & t^4 \omega ^6 \\
 \omega ^4 & t \omega ^{12} & t^4 \omega ^{16} & t^9 \omega ^8 \\
 t \omega ^4 & t^4 \omega ^{12} & t^9 \omega ^{16} & \omega ^8 \\
 t^4 \omega ^4 & t^9 \omega ^{12} & \omega ^{16} & t \omega ^8 \\
 t^9 \omega ^4 & \omega ^{12} & t \omega ^{16} & t^4 \omega ^8
\end{array}
\right)$$ and $\omega=e^{\frac{2 \pi i}{5}}.$ Letting $t$ be any transcendental complex number over the field $\Q(\omega ),$ the orbit of $v$ is a full spark and tight frame consisting of twenty vectors.
\end{example}

\subsection{Proof of Theorem \ref{EvenN}}

Let $N$ be a composite number and let $H$ be a subgroup of the automorphism
group of $\mathbb{Z}_{N}$ of order at least $p$, where $p$ is the smallest
prime factor of $N$. We first consider the unitary irreducible representations of
$\Z_N\rtimes H$ that are induced from characters of $\widehat{\mathbb{Z}_{N}}$ of
maximal order (see subsection \ref{Mackey}, especially the Remark towards the end).
Such a character has the form $\chi_m$ 
defined by
\[
\chi_m:x\mapsto\exp{\left(  {\frac{2\pi imx}{N}}\right)  }%
\]
where $\gcd(m,N)=1$, and
its stabilizer in $G=\mathbb{\ Z}_{N}\rtimes H$ is $G_{1}=\mathbb{\ Z}%
_{N}\rtimes{\left\{  {1}\right\}  }.$ Appealing to Mackey's analysis, the
representation
\[
\pi_m=\mathrm{ind}_{G_{1}}^{G}\chi_m%
\]
is an irreducible representation acting on a space $\mathcal{H}_{m}$ of
dimension $\abs{H}\geq p$. Actually, $\mathcal{H}_{m}$ is the direct sum of
one-dimensional spaces $W_{h}=(0,h)W_{1}$, indexed by $h\in H$. A generic
element $v$ of $\mathcal{H}_{m}$ is then written as
\[
v=\sum_{h\in H}(0,h)w_{h},
\]
where $w_{h}\in W_{1}=W$, where $W$ is the one-dimensional space on which
$\chi_m$ acts. Obviously, ${\left\{  {0}\right\}  }\rtimes H$ acts on
$\mathcal{H}_{m}$ by permuting coordinates (i.e. $\pi_m|H$ is just the regular
representation of $H$). We only need to determine the action of the other
factor, namely $\mathbb{\ Z}_{N}\rtimes{\left\{  {1}\right\}  }$. For arbitrary 
$x\in\mathbb{\ Z}_{N},$ we obtain
\[
(x,1)v=\sum_{h\in H}(x,h)w_{h}=\sum_{h\in H}(0,h)(h^{-1}x,1)w_{h}=\sum_{h\in
H}\exp{\left(  {\frac{2\pi imh^{-1}x}{N}}\right)  }(0,h)w_{h}.
\]
Hence $\mathbb{\ Z}_{N}\rtimes{\left\{  {1}\right\} }$ acts diagonally on
$\mathcal{H}_{m}$. The orbit of $v\in\mathcal{H}_{m}$ under the action of
$\mathbb{\ Z}_{N}\rtimes{\left\{  {1}\right\}  }$ is the set of vectors that
appear as columns of the $\mathcal{M}^{\top}v$, where $\mathcal{M}$ is the
$\abs{H}\times N$ submatrix of the Fourier matrix of $\mathbb{\ Z}_{N}$, whose
rows are indexed by $m\cdot H$. If the columns of $\mathcal{M}$ form a spark
deficient set, then $\mathcal{M}^{\top}v$ is also spark deficient for every
choice of $v$, proving Theorem \ref{EvenN}.\\

For this purpose, we invoke the following definition and theorem by Alexeev,
Cahill, and Mixon \cite{ACM12}.

\begin{defn}
\cite[Definition 7]{ACM12} We say that a subset $\mathcal{S}\subseteq
\mathbb{\ Z}_{N}$ is \emph{uniformly distributed over the divisors of }$N$ if,
for every divisor $d$ of $N$, the $d$ cosets of ${\left\langle {d}%
\right\rangle }$ partition $\mathcal{S}$ into subsets, each of size
${\lfloor{\frac{\abs{\mc{S}}}{d}}\rfloor}$ or ${\lceil{\frac{\abs{\mc{S}}}{d}%
}\rceil}$.
\end{defn}

\begin{theorem}
\label{ACM}\cite[Theorem 9]{ACM12} Select rows indexed by $\mathcal{S}%
\subseteq\mathbb{\ Z}_{N}$ from the $N\times N$ discrete Fourier transform
matrix to build the submatrix F. If $F$ is full spark, then $\mathcal{S}$ is
uniformly distributed over the divisors of $N$. If in addition, $N$ is a prime
power, then the converse holds as well: if $\mathcal{S}$ is uniformly
distributed over the divisors of N, then $F$ is full spark.
\end{theorem}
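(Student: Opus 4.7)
My plan is to split the statement into its two implications and handle each differently.

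For the necessity, I argue by contrapositive: suppose $\mc{S}\ssq\Z_N$ fails to be uniformly distributed with respect to some divisor $d\mid N$, and I will exhibit a column set $\mc{C}\ssq\Z_N$ with $\abs{\mc{C}}=\abs{\mc{S}}$ rendering the submatrix of $F$ indexed by $\mc{S}\times\mc{C}$ singular. Let $\omega=e^{2\pi i/N}$, let $K=\langle N/d\rangle\leq\Z_N$ (a subgroup of order $d$), and let $\zeta=\omega^{N/d}$, a primitive $d$-th root of unity. The structural identity I exploit is that for $s$ in the coset $C_i:=i+\langle d\rangle$ and $c=c_0+m(N/d)$ ranging over the $K$-coset $c_0+K$,
\[\omega^{sc}=\omega^{sc_0}\cdot\zeta^{im},\]
because $(s-i)(N/d)\equiv 0\pmod N$. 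Consequently, the block of the DFT with rows in $\mc{S}\cap C_i$ and columns in $c_0+K$ is the outer product of the column vector $(\omega^{sc_0})_{s\in\mc{S}\cap C_i}$ with the row vector $(\zeta^{im})_{m=0}^{d-1}$, and has rank at most one.

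Using this, I take $\mc{C}$ to be a union of $r=\lceil\abs{\mc{S}}/d\rceil$ cosets of $K$, from one of which I delete $rd-\abs{\mc{S}}$ elements so that $\abs{\mc{C}}=\abs{\mc{S}}$; the block argument above still gives rank at most one on the truncated block. The whole submatrix decomposes into such rank-one blocks, so the rank contributed by the row-band $\mc{S}\cap C_i$ is at most $\min(n_i,r)$, where $n_i=\abs{\mc{S}\cap C_i}$, and the total rank is at most $\sum_{i=0}^{d-1}\min(n_i,r)$. Non-uniformity forces some $n_i>r$, so this sum is strictly less than $\sum_i n_i=\abs{\mc{S}}$; the submatrix is singular, and $F$ is not full spark.

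For the converse when $N=p^n$, I would proceed by induction on $n$. The base case $n=1$ is Chebotarev's theorem for prime $N$, where uniform distribution is automatic. For the inductive step, I assume $\mc{S}$ uniformly distributed, fix $\mc{C}$ with $\abs{\mc{C}}=\abs{\mc{S}}$, and invoke the Cooley-Tukey factorization of $F_{p^n}$ through the intermediate subgroup $\langle p\rangle$ of index $p$; organizing rows of $F|_{\mc{S}\times\mc{C}}$ by cosets of $\langle p\rangle$ (there are $p$ of them, with $\mc{S}$ split by uniform distribution into blocks of sizes differing by at most one) and columns by cosets of $K=\langle p^{n-1}\rangle$, each block becomes a scaled submatrix of $F_{p^{n-1}}$ or $F_p$. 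A Laplace/Cauchy-Binet expansion along row-bands then writes $\det(F|_{\mc{S}\times\mc{C}})$ as a signed sum of products of minors of smaller DFT matrices, each nonzero by the inductive hypothesis and by Chebotarev at level $p$. The main technical obstacle is showing no cancellation occurs in this signed sum; I would address it by reducing $\det(F|_{\mc{S}\times\mc{C}})$ modulo the cyclotomic polynomial $\Phi_{p^n}(X)$ and arguing via a unique-leading-term analysis (in the spirit of the CI-monomial argument of Theorem \ref{first}) that a single contribution survives. The chain structure of subgroups of $\Z_{p^n}$ is what makes this tractable, and it is precisely this structure that fails when $N$ has two or more distinct prime factors, explaining why the converse requires the prime-power hypothesis.
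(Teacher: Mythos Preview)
Note first that the paper does not prove this theorem; it is quoted as \cite[Theorem~9]{ACM12} and invoked as a black box in the proof of Theorem~\ref{EvenN}, so there is no in-paper argument to compare against.

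Your necessity argument has a real gap: the assertion that non-uniformity forces some $n_i>r=\lceil|\mc{S}|/d\rceil$ is false when $d\nmid|\mc{S}|$. Take $N=6$, $d=3$, $\mc{S}=\{1,2,4,5\}$, giving $(n_0,n_1,n_2)=(0,2,2)$; this is non-uniform (the allowed sizes are $\{1,2\}$) yet $\max_i n_i=2=r$, and your rank bound $\sum_i\min(n_i,r)=4=|\mc{S}|$ yields nothing. The repair is a case split. If some $n_i>r$, your argument works verbatim. If instead some $n_{i_0}<q:=\lfloor|\mc{S}|/d\rfloor$, take the $qd\le|\mc{S}|$ columns coming from $q$ full $K$-cosets: the same rank-one block structure gives rank at most $\sum_i\min(n_i,q)\le n_{i_0}+(d-1)q<qd$, so these $qd$ columns are already dependent, and extending to any column set $\mc{C}$ of size $|\mc{S}|$ keeps the minor singular. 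Non-uniformity is precisely the disjunction of these two cases. (Equivalently, one can pass to $\mc{S}^c$ via Jacobi's complementary-minor identity for the unitary matrix $F_N$, which interchanges the two cases.)

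Your sketch for the prime-power converse identifies reasonable ingredients---Chebotarev at the base, a Cooley--Tukey block decomposition, and a Laplace expansion along row-bands---and you correctly flag the crux, namely ruling out cancellation in the resulting signed sum. But what you have written is a plan, not a proof: the ``unique leading term'' step requires an explicit ordering, a specification of the surviving term, and a verification that it is unique and nonzero, none of which you have supplied.
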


We conclude the proof of Theorem \ref{EvenN} simply by noting that if $p$ is
the smallest prime factor of $N$ and $H<\mathbb{\ Z}_{N}^{\times}$ is a
subgroup of order at least $p$, then $m\cdot H$ cannot be uniformly distributed over
the divisors of $N$ for $\gcd(m,N)=1$, as it cannot be uniformly distributed $\bmod p$, since no
element in $H$ can be a multiple of $p$, hence
\[
0=\abs{H\cap p\Z_N}<1\leq{\lfloor{\frac{\abs{H}}{p}}\rfloor},
\]
thus the columns of $\mathcal{M}$ form a spark deficient set, as desired.

Regarding the unitary irreducible representations of $\Z_N\rtimes H$
which arise from $\chi_{m}$, where $\gcd(m,N)=d>1$, we simply observe that 
$\chi_m$ is not faithful, therefore, all the corresponding
representations of $\mathbb{\ Z}_{N}\rtimes H$ that are obtained through
Mackey's method, are not faithful, hence spark deficient, proving the first part of the Theorem. 
We remark that if we mod out the
kernel, we obtain a faithful irreducible representation of $\mathbb{\ Z}%
_{N/d}\rtimes H$, that arises from the character
\[
\psi_{m}:\Z_{N/d}\ni x\mapsto\exp{\left(  {\frac{2\pi im x}{N}}\right)  }.
\]

For the second part of Theorem \ref{EvenN}, we consider again the characters $\chi_m$ of $\Z_N$ with
$\gcd(m,N)=1$ and we apply the reverse direction of Theorem \ref{ACM}
for prime powers, as any minor of the resulting matrix $\mc{M}$ will consist of rows indexed by a 
subset of $\Z_N$ which has fewer than $p$ elements, no two of which are congruent $\bmod p$. By definition, this is an
equidistributed set with respect to any divisor of $p^n$, therefore, \emph{any} minor of $\mc{M}$ is nonzero; 
the proof then follows immediately by Theorem \ref{maintool}, concluding the proof of Theorem \ref{EvenN}. \qedhere

\begin{remark}
	We notice that the full spark representation of $\Z_N\rtimes H$ mentioned in the proof has dimension $\abs{H}$,
	where $N=p^n$ a prime power and $\abs{H}<p$. Furthermore, the subgroup $H$ must have distinct elements $\bmod p$; this
	forces $H$ to be a subgroup of the unique subgroup of $\Z_N^{\times}$ of order dividing $p-1$.
	 Therefore, for any $d$ and a prime
	$p\equiv 1\bmod d$, there is a full spark equal norm tight frame in $\C^d$ consisting of $dp^n$ elements, for every positive integer
	$n$.
\end{remark}

\begin{corollary}
Let $N$ be even and $H$ be a subgroup of $\mathbb{\ Z}_{N}^{\times}$. Define
the irreducible representation $\pi$ of $G=\mathbb{\ Z}_{N}\rtimes H$ as
above, acting on $\mathcal{H} _{\pi}$. Then, the set ${\left\{  {\pi
(g)v}\right\}  }_{g\in G}$ is spark deficient for every $v\in\mathcal{H}
_{\pi}$. If $N$ is odd and $H={\left\{  {1,-1}\right\}  }$, then there exist vectors
$v\in\mathcal{H} _{\pi}$, such that ${\left\{  {\pi(g)v}\right\}  }_{g\in G}$
is full spark.
\end{corollary}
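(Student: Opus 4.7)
The plan for the first part is to apply the argument underlying Theorem \ref{EvenN} directly. The representation $\pi=\pi_m=\mathrm{ind}_{\mathbb{Z}_N\rtimes\{1\}}^{G}\chi_m$ (with $\gcd(m,N)=1$) acts on $\mathcal{H}_\pi\cong\mathbb{C}^{|H|}$, and its restriction to $\mathbb{Z}_N$ is diagonal; the diagonals of $\pi(x,1)$ assemble into an $|H|\times N$ matrix $\mathcal{M}$, whose rows are the rows of the Fourier matrix of $\mathbb{Z}_N$ indexed by $m\cdot H$. The next step is to observe that every element of $m\cdot H$ is a unit of $\mathbb{Z}_N$, hence coprime to $N$, hence odd (since $2\mid N$). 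Thus $m\cdot H$ lies entirely in a single coset of $2\mathbb{Z}_N$, so it is not uniformly distributed modulo the divisor $p=2$ of $N$. By Theorem \ref{ACM}, $\mathcal{M}$ is spark deficient, so some $|H|$ of its columns are linearly dependent. Since coordinatewise multiplication by any $v\in\mathbb{C}^{|H|}$ preserves linear dependence, the corresponding $|H|$ vectors in the $\mathbb{Z}_N$-suborbit of $v$ are linearly dependent, and hence the full $G$-orbit cannot be full spark.

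For the second part, Mackey's analysis identifies the non-character irreducibles of $G=\mathbb{Z}_N\rtimes\{1,-1\}$ (with $N$ odd) as the two-dimensional representations $\pi_m$ with $m\neq 0$, and the relevant $\pi$ is the one with $\gcd(m,N)=1$. On $\mathbb{C}^2$ the element $(x,1)$ acts as $\operatorname{diag}(\omega^{mx},\omega^{-mx})$, while $(0,-1)$ swaps the two coordinates, i.e.\ acts as the cyclic shift $T$ on $\mathbb{C}^2$. The $G$-orbit of $v\in\mathbb{C}^2$ therefore matches the template of Theorem \ref{maintool} exactly: it is $\{\mathcal{M}_\ell T^k v:0\leq\ell\leq N-1,\,0\leq k\leq 1\}$ with $\mathcal{M}_\ell=\operatorname{diag}(\omega^{m\ell},\omega^{-m\ell})$, the columns of the $2\times N$ matrix $\mathcal{M}_{k,\ell}=\omega^{(-1)^k m\ell}$. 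To apply Theorem \ref{maintool} we will verify that every consecutive-row minor of $\mathcal{M}$ is nonzero. The $1\times 1$ minors are roots of unity, and the $2\times 2$ minors are of the form
\[
\det\begin{pmatrix}\omega^{m\ell_1}&\omega^{m\ell_2}\\ \omega^{-m\ell_1}&\omega^{-m\ell_2}\end{pmatrix}=2i\sin\!\bra{\tfrac{2\pi m(\ell_1-\ell_2)}{N}},
\]
which, since $\gcd(2m,N)=1$ for $N$ odd coprime to $m$, vanishes only when $\ell_1\equiv\ell_2\pmod{N}$. Hence every consecutive-row minor is nonzero, and Theorem \ref{maintool} produces an open dense set of full spark vectors.

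The main obstacle, if any, is the dictionary matching between the Mackey realization of $\pi_m$ and the $(\mathcal{M}_\ell T^k)$-template of Theorem \ref{maintool}: once one recognizes that the cyclic factor acts diagonally on $\mathbb{C}^{|H|}$ while the $H$-factor acts by cyclic permutation of coordinates, both halves reduce to short nonvanishing computations -- the first via the uniform-distribution criterion of Theorem \ref{ACM}, the second via a direct sine calculation.
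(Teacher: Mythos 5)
Your proof is correct and follows the same route as the paper's (which simply cites Theorem \ref{ACM} for the first part and Theorem \ref{maintool} for the second). You usefully fill in what the paper leaves implicit: for the first part, the observation that $m\cdot H\subseteq\mathbb{Z}_N^\times$ consists entirely of odd residues, so cannot be uniformly distributed mod $2$; and for the second part, the explicit identification of the $G$-orbit with the template $\{\mathcal{M}_\ell T^k v\}$ (with $\pi(x,1)$ diagonal and $\pi(0,-1)=T$) together with the sine computation showing all $2\times 2$ minors of $\mathcal{M}$ are nonvanishing because $\gcd(2m,N)=1$ — which is precisely the hypothesis needed to invoke Theorem \ref{maintool}.
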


\begin{proof}
The first part is an immediate consequence of Theorem \ref{ACM}, while the
second part follows from Theorem \ref{maintool}.
\end{proof}

In the general case where $N$ is composite, the representation $\pi$ can only be full spark
if $\abs{H}<p$, where $p$ is the smallest prime factor of $N$, and $H$ does not have two elements congruent
$\bmod q$, for any prime factor $q$ of $N$. This, in particular, means that the natural $\bmod q$ reduction
$H\to\Z_q^{\times}$ is injective, showing that $H$ must be cyclic. This proves:

\begin{corollary}
 With notation as above, if $H<\Z_N^{\times}$ is non-cyclic, then $\pi$ is spark deficient.
\end{corollary}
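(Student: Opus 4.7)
The plan is to prove the contrapositive: I will assume that $\pi$ is full spark and deduce that $H$ must be cyclic. Recalling the construction of $\pi$ from the proof of Theorem \ref{EvenN}, the orbit of any $v\in\mathcal{H}_\pi$ under the normal subgroup $\Z_N\rtimes\{1\}$ appears as the set of columns of $\mathcal{M}^{\top}v$, where $\mathcal{M}$ is the $|H|\times N$ submatrix of the $N\times N$ Fourier matrix of $\Z_N$ whose rows are indexed by $m\cdot H\subseteq\Z_N$, for some $m$ with $\gcd(m,N)=1$ (so that the underlying character is faithful and the induced representation is of dimension $|H|$). If the columns of $\mathcal{M}$ themselves formed a spark deficient set, then $\mathcal{M}^{\top}v$ would be spark deficient for every $v$, making $\pi$ spark deficient. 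Hence the assumption that $\pi$ is full spark forces the columns of $\mathcal{M}$ to form a full spark set in $\C^{|H|}$.

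I would then invoke the forward direction of Theorem \ref{ACM}: this spark property implies that the index set $m\cdot H$ is uniformly distributed over the divisors of $N$. Fix any prime factor $q$ of $N$. Since every element of $H$ is a unit in $\Z_N$, it is in particular coprime to $q$, and the same holds for $m\cdot H$; consequently the zero coset of the subgroup $\langle q\rangle\le\Z_N$ is disjoint from $m\cdot H$. Uniform distribution modulo $q$ then forces $\lfloor|H|/q\rfloor=0$, so $|H|<q$, and moreover each of the remaining $q-1$ cosets contains at most one element of $m\cdot H$. Equivalently, no two distinct elements of $H$ are congruent modulo $q$, so the canonical reduction $H\to\Z_q^{\times}$ is an injective group homomorphism.

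Finally, since $q$ is prime, the group $\Z_q^{\times}$ is cyclic of order $q-1$, and every subgroup of a cyclic group is cyclic. Therefore $H$ injects into a cyclic group and is itself cyclic, contradicting the assumption that $H$ is non-cyclic. I do not foresee any serious obstacle: the corollary is essentially a clean formalization of the observation made in the paragraph immediately preceding its statement, and the only step that requires a brief verification is the derivation, from uniform distribution of $m\cdot H$ combined with $H\subseteq\Z_N^{\times}$, of pairwise incongruence of elements of $H$ modulo any chosen prime factor of $N$.
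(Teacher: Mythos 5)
Your proof is correct and takes essentially the same approach as the paper: the corollary is stated immediately after a paragraph that sketches exactly this argument, and you have simply filled in the details (in contrapositive form), using the forward direction of Theorem \ref{ACM} to deduce uniform distribution of $m\cdot H$, observing that disjointness from $\langle q\rangle$ forces $\lfloor|H|/q\rfloor=0$ and injectivity of the mod-$q$ reduction, and concluding cyclicity from $H\hookrightarrow\Z_q^\times$.
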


\begin{remark} It is known that the automorphism group of $\Z_N$ is cyclic if and only if $N=1,2,4,p^k,2p^k$ where $p$ is some positive odd prime number. In light of the preceding result, it is then clear that for any other values of $N,$ any irreducible representation of $\Z_N\rtimes \Z_N^{\times}$ obtained by inducing a unitary character of the normal group $\Z_N$ is spark deficient. \end{remark}

\section{\texorpdfstring{A generalization of the case of $\mathbb{Z}_{N}\rtimes H$ without an underlying group structure}{}}\label{nogroup}

Our primary objective in this section is to generalize Theorem \ref{ZnH}. Let

\[
T=\left(
\begin{array}
[c]{ccccc}%
0 & 1 & 0 & \cdots & 0\\
0 & 0 & 1 & \cdots & \vdots\\
\vdots & \vdots & 0 & \ddots & 0\\
0 & \cdots & 0 & \ddots & 1\\
1 & 0 & 0 & \cdots & 0
\end{array}
\right)
\]
and fix $\left(  \xi_{0},\xi_{1},\cdots,\xi_{N-1}\right)  \in\mathbb{Z}^{N}$
such that
\[
0\leq\xi_{0}<\xi_{1}<\cdots<\xi_{N-1}.
\]
For a real number $\lambda,$ define
\[
D_{\lambda}=\left(
\begin{array}
[c]{cccc}%
e^{2\pi i\lambda\xi_{0}} & 0 & \cdots & 0\\
0 & e^{2\pi i\lambda\xi_{1}} & \cdots & 0\\
\vdots & \vdots & \ddots & \vdots\\
0 & 0 & \cdots & e^{2\pi i\lambda\xi_{N-1}}%
\end{array}
\right)  .
\]
Next, let $\left\{  D_{\lambda}T^{\kappa}:\lambda\in\Lambda_{\tau},\kappa
\in\mathbb{Z}_{N}\right\}  $ be a collection of matrices unitary matrices
where%
\[
\Lambda_{\tau}=\left\{  0\leq\tau\lambda_{0}<\tau\lambda_{1}<\cdots
<\tau\lambda_{M-1}\right\}
\]
is a finite subset of the real line, $\tau$ is a fixed real number and
$\lambda_{0},\cdots,\lambda_{M-1}$ are some positive integers. We note that
the collection of matrices $\left\{  D_{\lambda}T^{\kappa}:\lambda\in
\Lambda_{\tau},\kappa\in\mathbb{Z}_{N}\right\}  $ rarely has the structure of
a group. 

\begin{remark}
For the specific case where $\left(  \xi_{0},\xi_{1},\cdots,\xi_{N-1}\right)
=\left(  0,1,\cdots,N-1\right)  $ and $\lambda=\frac{1}{N},$ we obtain
\[
D_{\frac{1}{N}}=\left(
\begin{array}
[c]{cccc}%
e^{\frac{2\pi i\left(  0\right)  }{N}} & 0 & \cdots & 0\\
0 & e^{\frac{2\pi i}{N}} & \cdots & 0\\
\vdots & \vdots & \ddots & \vdots\\
0 & 0 & \cdots & e^{\frac{2\pi i\left(  N-1\right)  }{N}}%
\end{array}
\right)  .
\]
Under these assumptions, it is quite easy to verify that the group generated
by $D_{\frac{1}{N}},T$ is the discrete Heisenberg group. Moreover, the group
$\left\langle D_{\frac{1}{N}},T\right\rangle $ acts unitarily and
irreducibly on $\mathbb{C}^{N}$ and it is known that there exists a vector
$f\in\mathbb{C}^{N}$ such that $\left\{  D_{\frac{1}{N}}^{m}T^{k}f:\left(
m,k\right)  \in\mathbb{Z}_{N}\times\mathbb{Z}_{N}\right\}  $ is a fullspark
frame for $\mathbb{C}^{N}.$
\end{remark}

We shall establish the following result.

\begin{theorem}
\label{general} If $e^{2\pi i\tau}$ is either transcendental or algebraic over
the rationals with sufficiently large degree, then there exists a nonzero
vector $f\in\mathbb{C}^{N}$ such that the collection $\left\{  D_{\lambda}T^{\kappa}%
f:\lambda\in\Lambda_{\tau},\kappa\in\mathbb{Z}_{N}\right\}  $ is a full spark
frame for $\mathbb{C}^{N}.$
\end{theorem}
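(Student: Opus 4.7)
The plan is to apply Theorem \ref{maintool} to the $N\times M$ matrix
\[
\mc{M}=\bra{e^{2\pi i\tau\lambda_{\ell}\xi_k}}_{\,0\leq k\leq N-1,\ 0\leq\ell\leq M-1},
\]
whose $\ell$-th column is precisely the diagonal of $D_{\tau\lambda_{\ell}}$. With this choice one has $\mc{M}_{\ell}T^{\kappa}f=D_{\tau\lambda_{\ell}}T^{\kappa}f$, so the theorem will follow once the hypothesis of Theorem \ref{maintool} is verified, namely that every minor of $\mc{M}$ consisting of consecutive rows is nonzero.

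To handle such a minor, I would fix $p$ consecutive rows (modulo $N$) and columns $\ell_0<\cdots<\ell_{p-1}$. Because the $\xi_k$ are pairwise distinct, reordering the $p$ rows (at the cost of an overall sign) lists the associated $\xi$-values in strictly increasing order $b_0<\cdots<b_{p-1}$; setting $a_j=\lambda_{\ell_j}$ one also has $a_0<\cdots<a_{p-1}$. With $\omega=e^{2\pi i\tau}$, the $(i,j)$-entry of the minor equals $\omega^{a_jb_i}$, so its determinant equals
\[
P(\omega)\ =\ \sum_{\sigma\in S_p}\sgn(\sigma)\,\omega^{\sum_{j=0}^{p-1}a_jb_{\sigma(j)}},
\]
an integer polynomial in $\omega$ whose degree is bounded above, uniformly over all consecutive-row minors, by $N\lambda_{M-1}\xi_{N-1}$.

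The crux of the argument is to show that $P\not\equiv 0$. Since both $(a_j)$ and $(b_j)$ are strictly increasing, the rearrangement inequality guarantees that the exponent $\sum_{j}a_jb_{\sigma(j)}$ is \emph{uniquely} maximized at $\sigma=\mathrm{id}$. Consequently the top-degree monomial $\omega^{\sum_j a_jb_j}$ survives the sum with coefficient $\pm 1$, and $P$ is a nonzero integer polynomial of bounded degree.

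To finish: if $\omega$ is transcendental over $\Q$, no nonzero polynomial with rational coefficients vanishes at $\omega$, so $P(\omega)\neq 0$; if instead $\omega$ is algebraic of degree strictly greater than $N\lambda_{M-1}\xi_{N-1}$, the minimal polynomial of $\omega$ exceeds the degree of $P$, again forcing $P(\omega)\neq 0$. In either case every consecutive-row minor of $\mc{M}$ is nonzero, and Theorem \ref{maintool} then produces a Zariski-open dense subset of $\C^N$ of vectors $f$ whose orbit under $\set{D_{\lambda}T^{\kappa}:\lambda\in\Lambda_{\tau},\ \kappa\in\Z_N}$ is a full spark frame. I expect the only delicate point to be the rearrangement-inequality identification of a unique top monomial of $P$; once that is in place, the hypothesis on $\omega$ is exactly what prevents the finitely many nonzero determinant polynomials from accidentally vanishing at the specific value $e^{2\pi i\tau}$.
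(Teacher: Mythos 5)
Your argument is correct, and it takes a genuinely different route to the key nonvanishing step. Both you and the paper reduce the theorem to showing that the relevant minors of $\mc{M}_\tau$ are nonzero polynomial expressions in $\omega=e^{2\pi i\tau}$ and then invoke transcendence or a degree bound. Where the paper factors each minor $P_S(t)$ as a generalized Vandermonde determinant, namely a power of $t$ times a Vandermonde product times a Schur polynomial, and uses positivity of Schur polynomial values at positive arguments to conclude $P_S\not\equiv 0$, you instead expand the determinant directly by the Leibniz formula and use the strict rearrangement inequality (both $(a_j)$ and $(b_i)$ strictly increasing) to see that the top exponent $\sum_j a_jb_j$ is attained by the identity permutation alone, so the leading coefficient is $\pm 1$. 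This is more elementary, avoids Schur function machinery, and also incidentally sidesteps a small awkwardness in the paper's positivity argument (the Vandermonde factor could vanish at $r=1$, so one must choose, say, $r>1$). It additionally yields an explicit bound $N\lambda_{M-1}\xi_{N-1}$ on the degree of every minor polynomial, which makes the phrase ``sufficiently large degree'' quantitative; the paper instead passes to the least common multiple $Q$ of all the $P_S$ without stating a bound. The paper's route is not wasted, though: the Schur polynomial factorization gives structural information (e.g.\ that every minor polynomial has nonnegative integer coefficients after extracting the Vandermonde and monomial factors) that your expansion does not directly reveal. Finally, note that nothing in your argument actually uses that the chosen rows are consecutive, so, like the paper, you are in fact showing that \emph{every} minor of $\mc{M}_\tau$ is nonzero, which is a strictly stronger conclusion than what Theorem~\ref{maintool} requires.
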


The matrix $\mathcal{M}$ as defined in Theorem \ref{first} takes the form
\[
\mathcal{M}:=\mathcal{M}_{\tau}=\left(
\begin{array}
[c]{cccc}%
e^{2\pi i\left(  \tau\lambda_{0}\right)  \xi_{0}} & e^{2\pi i\left(
\tau\lambda_{1}\right)  \xi_{0}} & \cdots & e^{2\pi i\left(  \tau\lambda
_{M-1}\right)  \xi_{0}}\\
e^{2\pi i\left(  \tau\lambda_{0}\right)  \xi_{1}} & e^{2\pi i\left(
\tau\lambda_{1}\right)  \xi_{1}} & \cdots & e^{2\pi i\left(  \tau\lambda
_{M-1}\right)  \xi_{1}}\\
\vdots & \vdots & \ddots & \vdots\\
e^{2\pi i\left(  \tau\lambda_{0}\right)  \xi_{N-1}} & e^{2\pi i\left(
\tau\lambda_{1}\right)  \xi_{N-1}} & \cdots & e^{2\pi i\left(  \tau
\lambda_{M-1}\right)  \xi_{N-1}}%
\end{array}
\right)  .
\]

\vskip 0.5cm 

To prove Theorem \ref{general}, we will need some additional tools.
 A \textit{Young diagram} is a collection of cells arranged in the form
of left-justified and weakly decreasing number of cells in each row. By
listing the number of cells in each row, we obtain a partition of a fixed
positive integer which represents the total number of cells in the diagram. A
\textit{partition} $\kappa=\left(  \kappa_{1},\cdots,\kappa_{m}\right)  $ is a
finite sequence of weakly decreasing positive integers, and a \textit{Young
tableau} is a filling that is (a) weakly increasing across each row and (b)
strictly increasing down each column. A Young tableau obtained by filling a
Young diagram associated with a partition $\kappa$ is called a tableau of
shape $\kappa.$

 For each partition $\kappa$, there is a corresponding symmetric
polynomial known as a \textit{Schur polynomial} described as follows. Given
any filling $\mu$ of a Young diagram, we define a monomial $x^{\mu}$ which is
obtained by taking the product of the variables $x_{k}$ corresponding to the
indices $k$ appearing in $\mu.$ More precisely, such a monomial takes the
form
\[
x^{\mu}=%
{\displaystyle\prod\limits_{k=1}^{m}}
x_{k}^{\text{number of times }k\text{ occurs in }\mu}.
\]
The Schur polynomial
\[
s_{\kappa}\left(  x_{1},x_{2},\cdots,x_{m}\right)  =\sum x^{\mu}%
\]
is obtained by adding of all such monomials coming from tableaux $\mu$ of
shape $\kappa$ using the natural numbers $1,\cdots,m.$

\subsection{Proof of Theorem \ref{general}}

In light of Theorem \ref{maintool}, to establish the first part of the result, it is enough to show that if the
complex number $e^{2\pi i\tau}$ is transcendental or
algebraic over the rationals with a sufficiently large degree of extension,
then every minor of $\mathcal{M}$ is nonzero. Let $S$ be a submatrix of
$\mathcal{M}$ of order $L.$ There exists an elementary (permutation) matrix
$E$ of order $n$ such that
\[
ES=\left(  e^{2\pi i\tau\lambda_{s_{k}}\xi_{r_{j}}}\right)  _{0\leq j\leq
L-1,0\leq k\leq L-1}=\left(
\begin{array}
[c]{cccc}%
e^{2\pi i\tau\lambda_{s_{0}}\xi_{r_{0}}} & e^{2\pi i\tau\lambda_{s_{1}}%
\xi_{r_{0}}} & \cdots & e^{2\pi i\tau\lambda_{s_{L-1}}\xi_{r_{0}}}\\
e^{2\pi i\tau\lambda_{s_{0}}\xi_{r_{1}}} & e^{2\pi i\tau\lambda_{s_{1}}%
\xi_{r_{1}}} & \cdots & e^{2\pi i\tau\lambda_{s_{L-1}}\xi_{r_{1}}}\\
\vdots & \vdots & \ddots & \vdots\\
e^{2\pi i\tau\lambda_{s_{0}}\xi_{r_{L-1}}} & e^{2\pi i\tau\lambda_{s_{1}}%
\xi_{1}} & \cdots & e^{2\pi i\tau\lambda_{s_{L-1}}\xi_{r_{L-1}}}%
\end{array}
\right)
\]
for some finite sequences $\left(  r_{j}\right)  _{j=0}^{L-1}$ and $\left(
s_{j}\right)  _{j=0}^{L-1}$ where $0\leq\xi_{r_{0}}<\xi_{r_{1}}<\cdots
<\xi_{r_{L-1}}.$ Put
\[
P_{S}\left(  t\right)  =\left\vert
\begin{array}
[c]{cccc}%
t^{\lambda_{s_{0}}\xi_{r_{0}}} & t^{\lambda_{s_{1}}\xi_{r_{0}}} & \cdots &
t^{\lambda_{s_{L-1}}\xi_{r_{0}}}\\
t^{\lambda_{s_{0}}\xi_{r_{1}}} & t^{\lambda_{s_{1}}\xi_{r_{1}}} & \cdots &
t^{\lambda_{s_{L-1}}\xi_{r_{1}}}\\
\vdots & \vdots & \ddots & \vdots\\
t^{\lambda_{s_{0}}\xi_{r_{L-1}}} & t^{\lambda_{s_{1}}\xi_{r_{L-1}}} & \cdots &
t^{\lambda_{s_{L-1}}\xi_{r_{L-1}}}%
\end{array}
\right\vert .
\]
Appealing to the multi-linearity of the determinant function, we obtain
\[
P_{S}\left(  t\right)  =\left(
{\displaystyle\prod\limits_{j=0}^{L-1}}
t^{\lambda_{s_{0}}\xi_{r_{j}}}\right)  \cdot\det\left(  t^{\left(
\lambda_{s_{i}}-\lambda_{s_{0}}\right)  \xi_{r_{j}}}\right)  _{0\leq i\leq
L-1,0\leq j\leq L-1}%
\]
where
\[
\det\left(  t^{\left(  \lambda_{s_{i}}-\lambda_{s_{0}}\right)  \xi_{r_{j}}%
}\right)  _{0\leq i\leq L-1,0\leq j\leq L-1}=\left\vert
\begin{array}
[c]{cccc}%
1 & t^{\left(  \lambda_{s_{1}}-\lambda_{s_{0}}\right)  \xi_{r_{0}}} & \cdots &
t^{\left(  \lambda_{s_{L-1}}-\lambda_{s_{0}}\right)  \xi_{r_{0}}}\\
1 & t^{\left(  \lambda_{s_{1}}-\lambda_{s_{0}}\right)  \xi_{r_{1}}} & \cdots &
t^{\left(  \lambda_{s_{L-1}}-\lambda_{s_{0}}\right)  \xi_{r_{1}}}\\
\vdots & \vdots & \ddots & \vdots\\
1 & t^{\left(  \lambda_{s_{1}}-\lambda_{s_{0}}\right)  \xi_{r_{L-1}}} & \cdots
& t^{\left(  \lambda_{s_{L-1}}-\lambda_{s_{0}}\right)  \xi_{r_{L-1}}}%
\end{array}
\right\vert .
\]
Note that $\left(  t^{\left(  \lambda_{s_{i}}-\lambda_{s_{0}}\right)
\xi_{r_{j}}}\right)  _{0\leq i\leq L-1,0\leq j\leq L-1}$ is a generalized
Vandermonde matrix. As such, \cite[Page 135]{olshevsky2001structured},
\begin{align*}
P_{S}\left(  t\right)   &  =\left(
{\displaystyle\prod\limits_{i>j}}
\left(  t^{\lambda_{s_{i}}-\lambda_{s_{0}}}-t^{\lambda_{s_{j}}-\lambda_{s_{0}%
}}\right)  \right)  \times\left(
{\displaystyle\prod\limits_{j=0}^{L-1}}
t^{\lambda_{s_{0}}\xi_{r_{j}}}\right)  \\
&  \times s_{\left(  \xi_{r_{L-1}}-L+1,\xi_{r_{L-2}}-L+2,\cdots,\xi_{r_{0}%
}\right)  }\left(  t^{\lambda_{s_{0}}-\lambda_{s_{0}}},t^{\lambda_{s_{1}%
}-\lambda_{s_{0}}},\cdots,t^{\lambda_{s_{L-1}}-\lambda_{s_{0}}}\right)
\end{align*}
where $s_{\left(  \xi_{r_{L-1}}-L+1,\xi_{r_{L-2}}-L+2,\cdots,\xi_{r_{0}%
}\right)  }$ is the Schur function with positive integer coefficients where
$\left(  \xi_{r_{L-1}}-L+1,\xi_{r_{L-2}}-L+2,\cdots,\xi_{r_{0}}\right)  $ is a
partition of the positive integer $\left(  \xi_{r_{L-1}}-L+1\right)  +\left(
\xi_{r_{L-2}}-L+2\right)  +\cdots+\xi_{r_{0}}.$ In other words, there exists a
finite sequence $I$ of $L$-tuples of non-negative integers such that \
\begin{align*}
&  s_{\left(  \xi_{r_{L-1}}-L+1,\xi_{r_{L-2}}-L+2,\cdots,\xi_{r_{0}}\right)
}\left(  t^{\lambda_{s_{0}}-\lambda_{s_{0}}},t^{\lambda_{s_{1}}-\lambda
_{s_{0}}},\cdots,t^{\lambda_{s_{L-1}}-\lambda_{s_{0}}}\right)  \\
&  =\sum_{\left(  m_{0},m_{1}\cdots,m_{L-1}\right)  \in I}t^{m_{0}\left(
\lambda_{s_{0}}-\lambda_{s_{0}}\right)  +m_{1}\left(  \lambda_{s_{1}}%
-\lambda_{s_{0}}\right)  +\cdots+m_{L-1}\left(  \lambda_{s_{L-1}}%
-\lambda_{s_{0}}\right)  }%
\end{align*}
where each integer $m_{j}$ represents the number of times the $j$-th variable
of the Schur function, occurs in the $\left(  \xi_{r_{L-1}}-L+1,\xi_{r_{L-2}%
}-L+2,\cdots,\xi_{r_{0}}\right)  $-tableaux. Thus,
\begin{align*}
P_{S}\left(  t\right)   &  =\left(
{\displaystyle\prod\limits_{j=0}^{L-1}}
t^{\lambda_{s_{0}}\xi_{r_{j}}}\right)  \times%
{\displaystyle\prod\limits_{i>j}}
\left(  t^{\lambda_{s_{i}}-\lambda_{s_{0}}}-t^{\lambda_{s_{j}}-\lambda_{s_{0}%
}}\right)  \\
&  \times\left(  \sum_{\left(  m_{0},m_{1}\cdots,m_{L-1}\right)  \in
I}t^{m_{0}\left(  \lambda_{s_{0}}-\lambda_{s_{0}}\right)  +m_{1}\left(
\lambda_{s_{1}}-\lambda_{s_{0}}\right)  +\cdots+m_{L-1}\left(  \lambda
_{s_{L-1}}-\lambda_{s_{0}}\right)  }\right)
\end{align*}
and in light of the discussion above, given a positive real number $r,$ it is
clear that $P_{S}\left(  r\right)  $ is not equal to zero since
\[
\sum_{\left(  m_{0},m_{1}\cdots,m_{L-1}\right)  \in I}r^{m_{0}\left(
\lambda_{s_{0}}-\lambda_{s_{0}}\right)  +m_{1}\left(  \lambda_{s_{1}}%
-\lambda_{s_{0}}\right)  +\cdots+m_{L-1}\left(  \lambda_{s_{L-1}}%
-\lambda_{s_{0}}\right)  }>0
\]
and we conclude that $P_{S}$ is a nonzero polynomial in the variable $t.$
Next, since
\[
\left\vert \det\left(  S\right)  \right\vert =\left\vert \det\left(  \left(
e^{2\pi i\tau\lambda_{s_{k}}\xi_{r_{j}}}\right)  _{0\leq j\leq L-1,0\leq k\leq
L-1}\right)  \right\vert =\left\vert P_{S}\left(  e^{2\pi i\tau}\right)
\right\vert
\]
under the assumption that $e^{2\pi i\tau}$ is a transcendental number, it
follows that $\det\left(  S\right)  =P_{S}\left(  e^{2\pi i\tau}\right)
\neq0.$ Otherwise, let $Q$ be the least common multiple of all polynomials
$P_{S}$ where $S$ runs through all possible square submatrices of
$\mathcal{M}.$ Next, let $\tau$ be a real number such that $e^{2\pi i\tau}$ is
an algebraic complex number which is not a root of the polynomial $Q.$ Fixing
such a real number $\tau$, we obtain that every minor of $\mathcal{M}_{\tau}$
is not equal to zero.

\begin{example}
	Let
	\[
	D=\left(
	\begin{array}
	[c]{cccc}%
	e^{2\pi i\sqrt{2}} & 0 & 0 & 0\\
	0 & e^{4\pi i\sqrt{2}} & 0 & 0\\
	0 & 0 & e^{6\pi i\sqrt{2}} & 0\\
	0 & 0 & 0 & e^{8\pi i\sqrt{2}}%
	\end{array}
	\right)  ,T=\left(
	\begin{array}
	[c]{cccc}%
	0 & 0 & 0 & 1\\
	1 & 0 & 0 & 0\\
	0 & 1 & 0 & 0\\
	0 & 0 & 1 & 0
	\end{array}
	\right)  ,f=\left(
	\begin{array}
	[c]{c}%
	1\\
	2\\
	3\\
	4
	\end{array}
	\right)  .
	\]
	With the help of a Computer Algebra System, we were able to verify that $$\left\{
	D^{k}T^{j}f:j\in\mathbb{Z}_{4},k\in\left\{  0,1,\cdots,6\right\}  \right\}$$
	is a full spark frame with upper and lower frame bounds roughly around $258$
	and $174$ respectively.
\end{example}



\end{document}